\newtheorem{theorem}{Theorem}[section]
\newtheorem{lemma}[theorem]{Lemma}
\newtheorem{cor}[theorem]{Corollary}
\newtheorem{prop}[theorem]{Proposition}
\begin{document}
\begin{center}
\Large{On $(k,l,H)$-kernels by walks and the $H$-class digraph}
\end{center}

\begin{center}
Hortensia Galeana-Sánchez and Miguel Tecpa-Galván
\end{center}
\[
\]

\begin{abstract}
Let $H$ be a digraph possibly with loops and $D$ a digraph without loops whose arcs are colored with the vertices of $H$ ($D$ is said to be an $H-$colored digraph). If  $W=(x_{0},\ldots,x_{n})$ is an open walk in $D$ and $i\in \{1,\ldots,n-1\}$, we say that there is an obstruction on $x_{i}$ whenever $(color(x_{i-1},x_{i}),color(x_{i},x_{i+1}))\notin A(H)$.
 
A $(k,l,H)$-kernel by walks in an $H$-colored digraph $D$ ($k\geq 2$, $l\geq 1$), is a subset $S$ of vertices of $D$, such that, for every pair of different vertices in $S$, every walk between them has at least $k-1$ obstructions, and for every $x\in V(D)\setminus S$ there exists an $xS$-walk with at most $l-1$ obstructions. This concept generalizes the concepts of kernel, $(k,l)$-kernel, kernel by monochromatic paths, and kernel by $H$-walks. If $D$ is an $H$-colored digraph, an $H$-class partition is a partition $\mathscr{F}$ of $A(D)$ such that, for every $\{(u,v),(v,w)\}\subseteq A(D)$, $(color(u,v),color(v,w))\in A(H)$ if and only if there exists $F$ in $\mathscr{F}$ such that $\{(u,v),(v,w)\}\subseteq F$. The $H$-class digraph relative to $\mathscr{F}$, denoted by $C_{\mathscr{F}}(D)$, is the digraph such that $V(C_{\mathscr{F}}(D))=\mathscr{F}$, and $(F,G)\in A(C_{\mathscr{F}}(D))$ if and only if there exist $(u,v)\in F$ and $(v,w)\in G$ with $\{u,v,w\}\subseteq V(D)$.

We will show sufficient conditions on $\mathscr{F}$ and $C_{\mathscr{F}}(D)$ to guarantee the existence of $(k,l,H)$-kernels by walks in $H$-colored digraphs, and we will show that some conditions are tight. For instance, we will show that if an $H$-colored digraph $D$ has an $H$-class partition in which every class induces a strongly connected digraph, and has an obstruction-free vertex, then for every $k\geq 2$, $D$ has a $(k,k-1,H)$-kernel by walks. Despite the fact that finding $(k,l)$-kernels in arbitrary $H$-colored digraphs is an NP-complete problem, some hypothesis presented in this paper can be verified in polynomial time.
\end{abstract}
\[
\]

$(k,l)$-kernel, $H$-colored digraph, $H$-kernel,  $(k,l,H)-$kernel by walks.

MSC class: 05C15, 05C20, 05C69.

\section{Introduction.}
Several practical and theoretical problems are related with the idea of convenient and inconvenient changes. For example, consider a group of participants, say $P$, and a set of imputations. We say that an imputation $z$ is \textit{superior} to an imputation $x$ if there is an influence-group into $P$, say $S_{xz}$, that can convince a sufficient number of participants that imputation $z$ has more benefits for $P$ than $x$, and such change gives particular benefits to $S_{xz}$. It is worth to find a set of imputations that represents \textit{good choices} for $P$. 

A digraph $D$ can be defined as follows: the vertex set is the set of imputations, and $(x,z)$ is an arc in $D$ if and only if there exists an influence-group $S_{xz}$ that can convince enough participants in $P$ to choose imputation $z$ rather than $x$, and such choice gives a particular benefit to the members in $S_{xz}$. Notice that the existence of a path $(x_{0}, x_{1}, \ldots , x_{n})$ in $D$ implies successive improvements for the participants. However, the change from $x_{1}$ to $x_{2}$ does not necessarily imply a particular benefit for $S_{x_{0}x_{1}}$, and conflicts of interest may arise that prevent the development of improvements for $P$.

Hence, we need to consider the possible conflict of interest that may arise through a chain of improvements. First, let $\rho$ be the arc-coloring in $D$ that assigns to an arc $(x,z)$ the influence-group $S_{xz}$. 
Now consider a secondary digraph $H$ whose vertices are the different influence-groups represented in the arcs of $D$, and $(G, G')$ is an arc in $H$ whenever a benefit to $G'$ implies a benefit to $G$. Notice that the arcs of the digraph $D$ are colored with the vertices of $H$, hence, $D$ is called an $H$-colored digraph.
Given two consecutive arcs in $D$, say $(x,z)$ and $(z,w)$, we say that there is a \textit{convenient change of color in $z$} if and only if $(\rho (x,z), \rho (z,w)) \in A(H)$. In this case, the influence-groups $S_{xz}$ and $S_{zw}$ will not be in a conflict of interest, and the improvement from $x$ to $z$, and then $z$ to $w$ will be possible. In the same way, an inconvenient change of colors implies a conflict of interest between $S_{xz}$ and $S_{zw}$, and establish agreements between both groups will be necessary in order to achieve an improvement from $x$ to $w$. 

In this sense, one may think that an imputation $z$ is more desirable than imputation $x$ if there exists an $xz$-walk in $D$ with no inconvenient changes of color. However, such kind of walks may not necessarily exist, and it is important to consider the number of inconvenient changes of color to reach $z$ from $x$. Given $k \geq 2$, and two different imputations $x$ and $z$, we will say that $z$ is \textit{more desirable} than $x$ if there exists an $xz$-walk in $D$ with less than $k-1$ inconvenient changes of color. A $(k,H)$-kernel by walks in $D$ is a set of imputations such that: (i) for every $x \in S$, there is no $z \in S$ such that $z$ is more desirable than $x$, and (ii) for every $x \in V(D) \setminus S$, there exists $z \in S$ such that $z$ is more desirable than $x$. A $(k,H)$-kernel by walks in $D$ represents a good choice of imputations for $P$. 

The formal notion of $(k,H)$-kernel was introduced in \cite{nearly}, and it is worth mentioning that the authors worked with paths, instead of walks, with at most certain number of inconvenient changes of color. 
Although there exist certain classes of $H$-colored digraphs that have a $(k,H)$-kernel for certain values of $k$, it is known that determine if an arbitrary $H$-colored digraph has a $(k,H)$-kernel is an NP-complete problem. Hence, it is worth to find conditions that guarantee the existence of such kind of kernels. However, there are several parameters to consider in order to find $(k,H)$-kernels in arbitrary $H$-colored digraphs, as (i) the digraph $H$, (ii) the $H$-colored digraph, (iii) the coloring of the arcs of $D$, and (iv) the value of $k$. In \cite{nearly} the authors worked by considering certain properties of the $H$-colored digraph.

In this paper we will present some sufficient conditions for the existence of a more general kind of kernels, namely $(k,l,H)$-kernels by walks, by 
considering two ideas: the $H$-colored digraph $D$ has a nice coloring on its arcs, and find $(k,l,H)$-kernels by walks in $D$ through the existence of $(k,l)$-kernels in an auxiliary digraph.

The way we will approach this problem is by means of certain kind of partitions of the arcs of an $H$-colored digraph, which will be called $H$-class partitions, and an auxiliary digraph, called the $H$-class digraph. The $H$-class partitions and the $H$-class digraph simplify the behavior of the arc-coloring in the $H$-colored digraph. Moreover, under certain conditions of such structures, we will be able to guarantee the existence of $(k,H)$-kernels in $H$-colored digraphs. For instance: 

\begin{theorem}(Theorem \ref{teostrong2})
If $D$ is an $H$-colored digraph and $\mathscr{F}$ is an $H$-class partition of $D$ such that every class in $\mathscr{F}$ induces a strongly connected digraph in $D$ and has an obstruction-free vertex in $D$, then for every $k \geq 2$, $D$ has a $(k,H)$-kernel by walks.
\end{theorem}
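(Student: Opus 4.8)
The plan is to transfer the problem to the $H$-class digraph $C_{\mathscr{F}}(D)$, locate the right distance-kernel there, and pull it back to $D$; unravelling the definitions, the target is a $(k,k-1,H)$-kernel by walks, i.e.\ a set $S$ such that every walk between two distinct vertices of $S$ has at least $k-1$ obstructions while every vertex outside $S$ reaches $S$ by a walk with at most $k-2$ obstructions. Write $V(F)$ for the vertex set of the strongly connected subdigraph of $D$ induced by a class $F\in\mathscr{F}$, and $d(\cdot,\cdot)$ for distance in $C_{\mathscr{F}}(D)$. The starting point is the dictionary between obstructions and class changes: by the defining property of an $H$-class partition, along a walk $W=(x_{0},\ldots,x_{n})$ there is an obstruction on $x_{i}$ exactly when $(x_{i-1},x_{i})$ and $(x_{i},x_{i+1})$ lie in different classes of $\mathscr{F}$, so the number of obstructions of $W$ equals the number of class changes along $W$, which is the length of the walk $W$ induces in $C_{\mathscr{F}}(D)$ after contracting each maximal block of consecutive arcs lying in a single class. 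Two observations make the correspondence reversible: (i) since each class $F$ induces a strongly connected digraph, any two vertices of $V(F)$ are joined by an obstruction-free walk, using only arcs of $F$; and (ii) for any walk $G_{0},G_{1},\ldots,G_{d}$ in $C_{\mathscr{F}}(D)$ and vertices $u\in V(G_{0})$, $v\in V(G_{d})$, concatenating obstruction-free in-class walks with the length-two transitions that witness the arcs $(G_{i-1},G_{i})$ yields a $uv$-walk in $D$ with at most $d$ obstructions. Combining the two directions, the minimum number of obstructions on a $uv$-walk in $D$ equals $\min\{d(F,F'):u\in V(F),\ v\in V(F')\}$.

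Next I would prove that $C_{\mathscr{F}}(D)$ is symmetric. If $V(F)\cap V(G)\neq\emptyset$, pick $v$ in the intersection; strong connectivity of both induced subdigraphs gives $v$ an in-arc and an out-arc in each of $F$ and $G$, so both $(F,G)$ and $(G,F)$ lie in $A(C_{\mathscr{F}}(D))$; and if the intersection is empty there is no arc either way. Hence $d$ is symmetric, so a set $N$ of vertices of $C_{\mathscr{F}}(D)$ that is maximal subject to having pairwise distance at least $k-1$ is automatically within distance $k-2$ of every vertex not in $N$ (a vertex at distance $\ge k-1$ from all of $N$ could be added to $N$). In other words $N$ is a $(k-1,k-2)$-kernel of $C_{\mathscr{F}}(D)$.

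Finally I would pull $N$ back to $D$. For each $F\in N$ choose an obstruction-free vertex $s_{F}\in V(F)$ (this is where the hypothesis that every class has an obstruction-free vertex is used), and let $S=\{s_{F}:F\in N\}$ together with all isolated vertices of $D$ (these must lie in any absorbent set and are vacuously independent). Since $s_{F}$ is obstruction-free, every arc of $D$ incident with it lies in $F$; so for distinct $F,G\in N$, any walk between $s_{F}$ and $s_{G}$ begins with an arc of $F$ and ends with an arc of $G$, hence has at least $d(F,G)\ge k-1$ obstructions, which gives the required independence. For absorbency, let $x\notin S$; then $x$ has an out-arc (otherwise $x$ is isolated and lies in $S$), lying in a class $F_{0}$. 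If $F_{0}\in N$, an obstruction-free walk inside $F_{0}$ from $x$ to $s_{F_{0}}$ has $0\le k-2$ obstructions; otherwise there is $F'\in N$ with $d(F_{0},F')\le k-2$, and applying (ii) to a shortest $F_{0}F'$-walk with $u=x$ and $v=s_{F'}$ produces an $xs_{F'}$-walk with at most $k-2$ obstructions. Thus $S$ is a $(k,H)$-kernel by walks.

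The real work, I expect, lies in the first two steps: the translation between walks of $D$ and walks of $C_{\mathscr{F}}(D)$ has to match the number of obstructions exactly (not merely up to an additive constant), and it is precisely the symmetry of $C_{\mathscr{F}}(D)$ — a consequence of every class being strongly connected — that makes ``minimum obstruction count'' behave like a symmetric distance and lets a greedy maximal set serve as the kernel. (In fact the symmetry alone already permits a more direct argument: a subset of $V(D)$ maximal with respect to the property ``every pair of its vertices is joined only by walks with at least $k-1$ obstructions'' is a $(k,H)$-kernel by walks; the detour through $C_{\mathscr{F}}(D)$ is what makes the hypotheses checkable and matches the general machinery developed earlier.) The obstruction-free vertices enter only in the last step, to guarantee that a chosen representative $s_{F}$ lies in no other $V(G)$; without this, two representatives could be joined by an obstruction-free walk and independence would fail.
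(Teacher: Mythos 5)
Your proof is correct and follows essentially the same route as the paper: strong connectivity of each class makes $C_{\mathscr{F}}(D)$ symmetric (Lemma \ref{lemmastronglyconnected}(b)), a maximal set of pairwise distance at least $k-1$ in a symmetric digraph is a $(k-1)$-kernel (Theorem \ref{c0.t1}), and choosing one obstruction-free vertex per selected class pulls this back to a $(k,H)$-kernel by walks (Proposition \ref{propstrong2}). The only notable difference is that your direct obstruction-counting argument handles $k=2$ uniformly, whereas the paper treats $k=2$ separately via the transitive auxiliary digraph of Theorem \ref{classlema}.
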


\begin{theorem}(Theorem \ref{corunilat})
Let $D$ be an $H$-colored digraph, $\mathscr{F}$ a walk-preservative $H$-class partition of $A(D)$ such that for every $F \in \mathscr{F}$, $D\langle F \rangle$ is unilateral and has no sinks. If  $C_{\mathscr{F}}(D)$ has a $(k,l)$-kernel for some $k \geq 3$ and $l \geq 1$, then $D$ has a $(k-1,l+1,H)$-kernel by walks.
\end{theorem}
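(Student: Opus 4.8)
The plan is to lift a $(k,l)$-kernel $N$ of $C:=C_{\mathscr F}(D)$ to $D$ by choosing one carefully placed vertex in each class of $N$. The dictionary between $D$ and $C$ is the following. By the definition of an $H$-class partition, a walk of $D$ has an obstruction at an internal vertex exactly when the two arcs incident with that vertex lie in different classes; hence, if $W$ is a walk in $D$ with $m$ obstructions whose first arc lies in a class $P$ and whose last arc lies in a class $Q$, then collapsing the maximal blocks of equal classes along $W$ yields a $PQ$-walk of length $m$ in $C$ (each class change along $W$ witnesses, by the definition of $C_{\mathscr F}(D)$, an arc of $C$), so $d_C(P,Q)\le m$, where $d_C$ denotes directed distance in $C$. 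In the other direction, walk-preservativity of $\mathscr F$ is precisely what lets one travel inside $D$ along a prescribed arc of $C$: from a vertex $u$ having an out-arc in a class $F$ and an arc $(F,G)\in A(C)$ one obtains a walk of $D$ from $u$ to a vertex of $V(D\langle G\rangle)$ whose arcs other than the last lie in $F$ and whose last arc lies in $G$; thus realizing one step of $C$ costs at most one obstruction.

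\textbf{The set $S$ and its independence.} We may assume $D$ has no isolated vertices (any such vertex may be placed in $S$, where it produces no walks and no constraints). For $F\in N$, since $D\langle F\rangle$ is unilateral it has a unique terminal strong component $T_F$, reachable within $D\langle F\rangle$ from every vertex of $D\langle F\rangle$; since $D\langle F\rangle$ has no sinks and $D$ is loopless, $T_F$ is non-trivial, so any vertex $v_F\in V(T_F)$ is both the head and the tail of an arc of $F$. Put $S:=\{v_F:F\in N\}$ and, for each $v\in S$, fix some $F_v\in N$ with $v_{F_v}=v$; then $v\mapsto F_v$ is injective. Now let $a\ne b$ be vertices of $S$ and let $W$ be any $ab$-walk with $s$ obstructions, whose first arc lies in $P$ and last arc in $Q$. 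As $v_{F_a}=a$ has an in-arc in $F_a$ while the first arc of $W$ leaves $a$ and lies in $P$, these two arcs share $a$, so $F_a=P$ or $(F_a,P)\in A(C)$, giving $d_C(F_a,P)\le 1$; symmetrically $d_C(Q,F_b)\le 1$, using an out-arc of $b$ in $F_b$. Since $F_a\ne F_b$ are vertices of the $(k,l)$-kernel $N$, $d_C(F_a,F_b)\ge k$, and the triangle inequality gives $k\le 1+s+1$, i.e. $W$ has at least $k-2$ obstructions — the independence required for a $(k-1,l+1,H)$-kernel by walks.

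\textbf{Absorbency.} Let $x\in V(D)\setminus S$. Then $x$ is not isolated, and since no class induces a digraph with a sink, $x$ has an out-arc, lying in some class $G_0$, so $x\in V(D\langle G_0\rangle)$. If $G_0\in N$, then within $D\langle G_0\rangle$ the vertex $x$ reaches $v_{G_0}\in S$, an $xS$-walk with no obstruction. If $G_0\notin N$, then $l$-absorbency of $N$ in $C$ yields a directed path $(G_0,G_1,\dots,G_t)$ with $G_t\in N$ and $t\le l$; applying walk-preservativity successively to the arcs $(G_i,G_{i+1})$, starting from $x$, produces a walk in $D$ from $x$ to a vertex of $V(D\langle G_t\rangle)$ with at most $t$ obstructions, after which $v_{G_t}\in S$ is reached inside $D\langle G_t\rangle$ at no further cost. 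In every case $x$ admits an $xS$-walk with at most $l=(l+1)-1$ obstructions, so $S$ is a $(k-1,l+1,H)$-kernel by walks.

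\textbf{Main obstacle.} The delicate point is the absorbency lift, which must be matched exactly against the definition of walk-preservativity: one has to be sure that an arbitrary entry vertex of a class can be routed through that class and out into the next prescribed class at the cost of a single obstruction, and that the combined hypothesis ``$D\langle F\rangle$ unilateral with no sinks'' both furnishes the terminal targets $v_F$ and guarantees each is reachable from wherever a lift deposits the walk. The bookkeeping of the two $+1$'s in the independence inequality — the two endpoints of a $D$-walk sitting at $C$-distance at most $1$ from the classes $F_a,F_b$ that certify $a,b\in S$ — is exactly what forces the passage from $(k,l)$ to $(k-1,l+1)$.
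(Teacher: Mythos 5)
Your proof is correct, and it reaches the theorem by a genuinely different construction of the lifted kernel, even though both arguments rest on the same dictionary (an obstruction of a $D$-walk is exactly a class change, which projects to an arc of $C_{\mathscr F}(D)$, and walk-preservativity lifts arcs of $C_{\mathscr F}(D)$ back into $D$ at the cost of at most one obstruction each). The paper proves the theorem via Proposition \ref{unilateraltheorem}: it takes an \emph{arbitrary} kernel by paths $K$ of the arc-induced subdigraph $D\langle \cup_{F\in\mathcal S}F\rangle$, obtains $(l+1,H)$-absorbency from Proposition \ref{c1.p2}, and for $(k-1,H)$-independence anchors the endpoints of a hypothetical walk to classes of $\mathcal S$ using Lemma \ref{c1.l0} (an in-arc of $x_0$ in some class of $\mathcal S$) and the no-sinks hypothesis (an out-arc of $x_n$ in some class of $\mathcal S$), with the unilaterality entering only through Lemma \ref{unilateral.lema} to force the two anchor classes to be distinct; the count is then finished by a four-case analysis according to whether the first and last classes of the projected walk lie in $\mathcal S$. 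You instead choose the set explicitly---one vertex in the unique, non-trivial terminal strong component of each $D\langle F\rangle$ with $F$ in the $(k,l)$-kernel---so that each chosen vertex has both an in-arc and an out-arc in its \emph{own} class; distinctness of the anchor classes is then automatic from the injectivity of $v\mapsto F_v$, Lemma \ref{unilateral.lema} is not needed, and the four cases collapse into the single inequality $k\le 1+s+1$. What the paper's version buys is generality (\emph{every} kernel by paths of $D\langle\cup_{F\in\mathcal S}F\rangle$ works, a fact the authors reuse elsewhere); what yours buys is an explicit polynomial-time construction and a uniform, case-free independence bound. One cosmetic remark on the absorbency lift: the concatenation $x\to w_1\to\cdots\to w_t$ has at most $t-1$ obstructions and the final hop into $D\langle G_t\rangle$ may cost one more, so the correct total is still at most $t\le l$ obstructions, i.e.\ $H$-length at most $l+1$, exactly as you conclude.
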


Some hypothesis presented in this paper can be verified in polynomial time, and under the hypothesis presented in this paper, find a $(k,l,H)$-kernel is polynomial solvable. Moreover, we will show that some conditions are tight. 

\section{Definitions and previous results}

For terminology and notation not defined here, we refer the reader to  \cite{1}.  If $D =(V(D), A(D))$ is a digraph and $x\in V(D)$, we denote by $A^{+}(x)$ the set $\{ (x,v) \in A(D): v \in V(D) \}$, $A^{-}(x)$ the set $\{ (u,x) \in A(D) : u \in V(D) \}$, and $A(x)=A^{-}(x) \cup A^{+}(x)$. If $D$ is a digraph without loops, a \emph{sink} is a vertex $x$ such that $A^{+}(x)=\emptyset$.
If $F$ is a subset of arcs in $D$, we denote by $D\langle F \rangle$ the subdigraph \textit{arc-induced by $F$}, that is $A(D\langle F \rangle) = F$ and $V(D\langle F \rangle)$ consist in those vertices of $D$ which are incident with at least one arc in $F$.

If $H$ is a digraph possibly with loops, a \emph{sink} is a vertex $x$ such that $A^{+}(x) \subseteq \{(x,x)\}$.  If  $S_1$ is a subsets of $V(D)$, we denote by $N^{+}(S_{1})$ the \emph{proper out-neighbor of $S_{1}$}.  In this paper we write walk, path and cycle, instead of directed walk, directed path, and directed cycle, respectively.
 If $W=(x_{0}, \ldots , x_{n})$ is a walk (path), we say that $W$ is an $x_0x_n$-\emph{walk} ($x_0x_n$-\emph{path}). The \emph{length} of $W$ is the number $n$ and it is denoted by $l(W)$.
If $T_{1}=(z_{0}, \ldots , z_{n})$ and $T_{2}=(w_{0}, \ldots , w_{m})$ are walks and $z_{n}=w_{0}$, we denote by $T_{1} \cup T_{2}$ the walk $(z_{0}, \ldots , z_{n} = w_{0}, \ldots , w_{m})$. 
If $x$ belongs to a walk $W$, we denote by $x^{+}$ (respectively $x^{-}$) the successor (respectively predecessor) of $x$ in $W$.

If $S_1$ and $S_2$ are two disjoint subsets of  $V(D)$, a $uv$-walk  in  $D$ is called an $S_1S_2$-\emph{walk} whenever $u$ $\in$ $S_1$ and $v$ $\in$ $S_2$. If $S_1 = \{x\}$ or $S_2 = \{x\}$, then we write $xS_2$-\emph{walk} or $S_1x$-\emph{walk}, respectively. 
A digraph $D$ is \textit{unilateral} if for every $\{u,v\} \subseteq V(D)$ there exists either a $uv$-path or a $vu$-path. A \textit{strongly connected} digraph is a digraph such that for every $\{u,v\} \subseteq V(D)$, there exist a $uv$-path and a $vu$-path. 

The concept of \emph{kernel} was introduced by von Neumann and Morgenstern in \cite{2} as a subset $S$ of vertices of a digraph $D$, such that for every pair of different vertices in $S$, there is no arc between them, and every vertex not in $S$ has at least one out-neighbor in $S$. This concept has been deeply and widely studied by several authors, for example \cite{ker1}, \cite{ker2}, \cite{ker3} and \cite{ker4}. In \cite{3} Chvátal showed that deciding if a digraph has a kernel is an NP-complete problem, and a classical result proved by König \cite{kon} shows that every transitive digraph has a kernel.

A subset $S$ of vertices of $D$ is said to be a \emph{kernel by paths}, if for every $x \in V(D)\setminus S$, there exists an $xS$-path (that is, $S$ is \emph{absorbent by paths}) and, for every pair of different vertices $\{ u, v \} \subseteq S$, there is no $uv$-path in $D$ (that is, $S$ is \emph{independent by paths}). 
This concept was introduced by Berge in \cite{19}, and it is a well-known result that every digraph has a kernel by paths \cite{19} (see Corollary 2 on p. 311). Moreover, such kind of kernels can be constructed by taking one arbitrary vertex in each terminal strong component of the digraph. The following lemma will be useful.

\begin{lemma}
\label{c0.l1}
If $D$ is a digraph with not isolated vertices and $K$ is a kernel by paths in $D$, then for every $x \in K$, $d^{-}_{D}(x) \neq 0$.
\end{lemma}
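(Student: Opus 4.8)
The plan is to argue by contradiction. Suppose there is a vertex $x \in K$ with $d^{-}_{D}(x) = 0$. Since $D$ has no isolated vertices, $x$ is not isolated, so $d^{+}_{D}(x) \geq 1$ and we may fix an arc $(x,y) \in A(D)$; as $D$ has no loops, $y \neq x$. I would then split into two cases according to whether $y \in K$.

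If $y \in K$, then $(x,y)$ is an $xy$-path joining two different vertices of $K$, which contradicts that $K$ is independent by paths. If $y \notin K$, then, since $K$ is absorbent by paths, there exists a $yK$-path $P = (y = z_{0}, z_{1}, \ldots , z_{m})$ with $z_{m} \in K$. The key observation is that $x \notin V(P)$: every vertex of a path other than its initial vertex has an in-neighbour on that path, hence positive in-degree in $D$; since $d^{-}_{D}(x) = 0$ and the initial vertex of $P$ is $y \neq x$, the vertex $x$ cannot lie on $P$. Consequently, prepending the arc $(x,y)$ to $P$ yields a genuine path $(x, z_{0}, z_{1}, \ldots , z_{m})$, that is, an $xz_{m}$-path. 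Since $z_{m} \in V(P)$ while $x \notin V(P)$, we have $z_{m} \neq x$, so this is an $xz_{m}$-path between two distinct vertices of $K$, again contradicting that $K$ is independent by paths. In either case we reach a contradiction, so $d^{-}_{D}(x) \neq 0$ for every $x \in K$.

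I do not expect a genuine obstacle here. The only point requiring care is the verification that the concatenation of the arc $(x,y)$ with the absorbing path $P$ is again a \emph{path} and not merely a walk; this is precisely the place where the hypothesis $d^{-}_{D}(x) = 0$ is used, together with the facts that $D$ has no loops and that, since $x \in K$ and $y \notin K$, the starting vertex of $P$ differs from $x$.
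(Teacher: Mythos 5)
Your proof is correct and follows essentially the same route as the paper's: assume $d^{-}_{D}(x)=0$, take an out-arc $(x,y)$, note $y\notin K$ by independence, and prepend $(x,y)$ to an absorbing $yK$-path to contradict independence by paths. You are in fact slightly more careful than the paper, which simply asserts that $(x,y)\cup P$ is a path, whereas you justify $x\notin V(P)$ via $d^{-}_{D}(x)=0$; this extra verification is welcome but not a departure in method.
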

\begin{proof}
Proceeding by contradiction, suppose that there exists $x \in K$ such that $d^{-}_{D}(x) = 0$. Since $D$ has not isolated vertices, then there exists $y \in V(D)$ such that $(x,y) \in A(D)$, which implies that $y \notin K$. Hence, there exists a $yz$-path in $D$, say $P$, such that $z \in K$. Since $d^{-}_{D}(x)=0$, we have that $z \neq x$, which implies that $(x,y) \cup P$ is an $xz$-path in $D$ with $\{ x,z \} \subseteq K$, contradicting the independence by paths of $K$. Therefore, $d^{-}_{D}(x) \neq 0$.
\end{proof}

The concept of $(k,l)$-kernel was introduced by Borowiecki and Kwa\'snik in \cite{14} as follows:
If $k \geq 2$, a subset $S$ of vertices of a digraph $D$ is a \emph{$k$-independent set}, if for every pair of different vertices in $S$, every walk between them has length at least $k$. If $l \geq 1$, we say that $S$ is an \emph{$l$-absorbent set} if for every $x \in V(D) \setminus S$ there exists an $xS$-walk with length at most $l$.
If $k \geq 2$ and $l \geq 1 $, a \emph{$(k,l)$-kernel} is a subset of $V(D)$ which is $k$-independent and $l$-absorbent. If $l=k-1$, the $(k,l)$-kernel is called a \emph{$k$-kernel}. Notice that every $2$-kernel is a kernel. Sufficient conditions for the existence of $k$-kernels have been proved, for example see \cite{symmetricdigraphs}, \cite{6} and \cite{5}. In \cite{symmetricdigraphs} the authors proved the following theorem:
  
\begin{theorem}\cite{symmetricdigraphs}
\label{c0.t1}
If $D$ is a symmetric digraph, then $D$ has a $k$-kernel for every $k \geq 2$. Moreover, every maximal $k$-independent set in $D$ is a $k$-kernel.
\end{theorem}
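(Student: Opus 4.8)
The plan is to translate both defining properties of a $k$-kernel into statements about the ordinary distance in $D$, and then to observe that symmetry makes this distance behave exactly as in an undirected graph. Concretely, for $u,v \in V(D)$ let $d(u,v)$ denote the length of a shortest $uv$-walk, with $d(u,v)=\infty$ if no such walk exists. Since $D$ is symmetric, reversing a $uv$-walk yields a $vu$-walk of the same length, so $d(u,v)=d(v,u)$; the triangle inequality holds because walks concatenate. The first observation I would record is that, because the minimum length of a walk between two vertices is precisely their distance, a set $S$ is $k$-independent if and only if $d(u,v) \ge k$ for every pair of distinct $u,v \in S$, and $S$ is $(k-1)$-absorbent if and only if $\min_{s \in S} d(x,s) \le k-1$ for every $x \in V(D)\setminus S$ (here the $xS$-walk and the $Sx$-walk of that length exist simultaneously, again by symmetry).

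Next I would establish that a maximal, with respect to inclusion, $k$-independent set $S$ exists: for finite $D$ this is immediate, and in general it follows from Zorn's Lemma, since $k$-independence is a property of pairs of vertices and is therefore preserved under unions of chains.

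Then comes the core of the argument: any such maximal $S$ is $(k-1)$-absorbent, hence a $k$-kernel. Take $x \in V(D)\setminus S$. By maximality, $S \cup \{x\}$ is not $k$-independent, so there are two distinct vertices of $S \cup \{x\}$ joined by a walk of length at most $k-1$. Since $S$ itself is $k$-independent, one of these two vertices must be $x$; thus there is $s \in S$ with $d(x,s) \le k-1$, which yields an $xS$-walk of length at most $k-1$. Therefore $S$ is $k$-independent and $(k-1)$-absorbent, i.e. a $(k,k-1)$-kernel, which is by definition a $k$-kernel; and the existence of a $k$-kernel follows from the existence of a maximal $k$-independent set.

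I do not expect a serious obstacle here: the only points requiring care are the equivalences in the first paragraph --- in particular the reduction of ``every walk between them has length at least $k$'' to ``the shortest walk between them has length at least $k$'' --- and the bookkeeping of walk directions, both of which are handled cleanly once symmetry is invoked. As a sanity check, the argument specializes correctly to $k=2$, where it recovers the classical fact that every maximal independent set in a symmetric digraph is a kernel.
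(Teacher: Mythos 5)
Your proof is correct. Note that the paper does not prove this statement at all --- it is imported from \cite{symmetricdigraphs} as a known result --- so there is no in-paper argument to compare against; your write-up supplies the standard proof that the cited survey gives. The two points that actually need care are exactly the ones you flag: reducing ``every walk between $u$ and $v$ has length at least $k$'' to a condition on the distance (sound, since the condition holds for all walks iff it holds for a shortest one, and symmetry collapses $d(u,v)$ and $d(v,u)$), and the maximality step, where $S\cup\{x\}$ failing $k$-independence forces the offending pair to involve $x$ because $S$ itself is $k$-independent, after which symmetry turns a short $sx$-walk into a short $xs$-walk. The Zorn's Lemma remark is apt given that the cited source concerns infinite digraphs (a union of a chain of $k$-independent sets is $k$-independent because any two vertices of the union lie in a common member of the chain). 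The triangle inequality you mention is never actually used and could be dropped.
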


As a consequence, we have the following lemma.

\begin{lemma}
\label{symetricklkernel} Let $D$ be a symmetric digraph and $\{k, l \} \subseteq \mathbb{N}$. If $2 \leq k $ and $k-1 \leq l$, then $D$ has a $(k,l)$-kernel.
\end{lemma}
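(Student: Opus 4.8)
The plan is to deduce this directly from Theorem~\ref{c0.t1}. Recall the hypotheses: $D$ is symmetric, $2 \le k$, and $k-1 \le l$. By Theorem~\ref{c0.t1}, $D$ has a $k$-kernel; call it $S$. I claim that $S$ is itself a $(k,l)$-kernel. Indeed, $S$ is $k$-independent by definition of $k$-kernel, and $S$ is $(k-1)$-absorbent, again by definition of $k$-kernel. So it remains only to observe that $(k-1)$-absorbency implies $l$-absorbency whenever $l \ge k-1$: if for every $x \in V(D) \setminus S$ there is an $xS$-walk of length at most $k-1$, then in particular there is an $xS$-walk of length at most $l$. Hence $S$ is $k$-independent and $l$-absorbent, i.e., a $(k,l)$-kernel.

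In writing this up I would state the two trivial monotonicity facts explicitly for clarity: (i) $k$-independence is exactly the property required for the independence side of a $(k,l)$-kernel, so no adjustment is needed there; and (ii) $m$-absorbency implies $m'$-absorbency for all $m' \ge m$, applied with $m = k-1$ and $m' = l$. The only genuine input is Theorem~\ref{c0.t1}, which guarantees the existence of a $k$-kernel in any symmetric digraph; everything else is bookkeeping against the definitions of $k$-independent, $l$-absorbent, $(k,l)$-kernel, and $k$-kernel as given just above the statement.

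There is essentially no obstacle here; the lemma is a packaging of Theorem~\ref{c0.t1} into the $(k,l)$-parameter language that the rest of the paper uses. The one thing to be careful about is the edge/degenerate reading of the definitions — e.g., ensuring that "$l \ge k-1$" is used in the correct direction (a longer absorbency budget is a weaker requirement) — but this is immediate. I would keep the proof to two or three sentences: invoke Theorem~\ref{c0.t1} to get a $k$-kernel $S$, note $S$ is $k$-independent, note $S$ is $(k-1)$-absorbent hence $l$-absorbent since $l \ge k-1$, and conclude $S$ is a $(k,l)$-kernel.
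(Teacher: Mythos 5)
Your proposal is correct and is exactly the paper's argument: the paper's proof is a one-line appeal to Theorem~\ref{c0.t1} together with the definition of $(k,l)$-kernel, and your write-up simply makes explicit the monotonicity of absorbency ($(k-1)$-absorbent implies $l$-absorbent for $l \geq k-1$) that the paper leaves implicit.
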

\begin{proof}
It follows from Theorem \ref{c0.t1} and the definition of $(k,l)$-kernel.
\end{proof}

A digraph is \emph{$m$-colored} if its arcs are colored with $m$ colors. If $D$ is an $m$-colored digraph, a path in $D$ is called \emph{monochromatic} (respectively, \emph{alternating}) if all of its arcs are colored alike (respectively, consecutive arcs have different colors). 
A subset $S$ of vertices of $D$ is a \emph{kernel by monochromatic paths} (respectively, \emph{kernel by alternating walks}) if for every $x \in V(D) \setminus S$ there exists a monochromatic $xS$-path (respectively an alternating $xS$-walk), and no two different vertices in $S$ are connected by a monochromatic path (respectively, by an alternating walk). 
Notice that a digraph $D$ has a kernel if and only if the $m$-colored digraph $D$, in which every two different arcs have different colors, has a kernel by monochromatic paths. 
The existence of kernels by monochromatic paths was introduced in \cite{9}. 
Due to the difficulty of finding kernels by monochromatic paths in $m$-colored digraphs, in \cite{11} was defined the \emph{color-class digraph} of an $m$-colored digraph, denoted by $\mathscr{C}(D)$, as the digraph whose set of vertices are the colors represented in the arcs of $D$, and $(c_{1}, c_{2})$ is an arc in $\mathscr{C}(D)$ if and only if there exist two arcs of $D$, namely $(u,v)$ and $(v,w)$, such that $(u,v)$ has color $c_{1}$ and $(v,w)$ has color $c_{2}$. Several conditions on the color-class digraph guarantee the existence of a kernel by monochromatic paths.

Let $H$ be a digraph possibly with loops, and $D$ a digraph without loops whose arcs are colored with the vertices of $H$ ($D$ is said to be an $H$-colored digraph). For an arc $(x,z)$ of $D$, we denote by $\rho(x,z)$ its color. A vertex $x \in V(D)$ is \emph{obstruction-free in $D$} if $(\rho (a), \rho (b)) \in A(H)$ whenever $a \in A^{-}(x)$ and $b \in A^{+}(x)$. We say that a subdigraph $D'$ of $D$ is an \emph{$H$-digraph}, if for every two arcs $(u,v)$ and $(v,w)$ in $D'$ we have that $(\rho (u,v), \rho (v,w))\in A(H)$. 

An \emph{$H$-class partition of $A(D)$} is a partition of $A(D)$, say $\mathscr{F}$, such that for every $\{ (u,v), (v,w)\} \subseteq A(D)$, $(\rho (u,v), \rho (v,w)) \in A(H)$ if and only if there exists $F$ in $\mathscr{F}$ such that $\{ (u,v),(v,w)\} \subseteq F$. An $H$-class partition $\mathscr{F}$ is \emph{walk-preservative} if  for every $(F,G) \in A(C_{\mathscr{F}}(D))$ and $z \in V(D\langle F \rangle)$, there exists a $zw$-path in $D\langle F \rangle$ for some $w \in  V(D \langle G \rangle )$. Notice that $w \in  V(D \langle F \rangle ) \cap V(D \langle G \rangle )$. If $x \in V(D)$, we define $N^{-}_{\mathscr{F}}(x)=\{ F \in \mathscr{F} : (u,x) \in F$ for some $u \in V(D)\}$, $N^{+}_{\mathscr{F}}(x)=\{ F \in \mathscr{F} : (x,v) \in F$ for some $v \in V(D)\}$, and $N_{\mathscr{F}}(x)=N^{+}_{\mathscr{F}}(x) \cup N^{-}_{\mathscr{F}}(x)$. 

If $\mathscr{F}$ is an $H$-class partition of $A(D)$, the \emph{$H$-class digraph relative to $\mathscr{F}$}, denoted by $C_{\mathscr{F}}(D)$, is the digraph such that $V(C_{\mathscr{F}}(D)) = \mathscr{F}$, and $(F_{i}, F_{j})$ is an arc in $C_{\mathscr{F}}(D)$, if and only if there exist $(u,v) \in F_{i}$ and $(v,w) \in F_{j}$ for some $\{ u, v, w \} \subseteq V(D)$. 
Notice that $C_{\mathscr{F}}(D)$ can allow loops. Moreover, $\mathscr{C}(D)$ is a particular case of $C_{\mathscr{F}}(D)$ when $H$ has only loops, every vertex in $H$ has a loop and every class in $\mathscr{F}$ consist in those arcs colored alike. 

If $W=(x_{0}, \ldots , x_{n})$ is a walk in an $H$-colored digraph $D$, and $i \in \{0, \ldots , n-1 \}$, we say that there is an \textit{obstruction on $x_{i}$} if and only if $(\rho (x_{i-1}, x_{i}), \rho (x_{i}, x_{i+1})) \notin A(H)$ (indices are taken modulo $n$ if $x_{0}=x_{n}$). 
We denote by $O_{H}(W)$ the set $\{ i \in \{ 1, \ldots , n-1\} : \text{there is an obstruction on }x_{i} \}$ ($O_{H}(W)=\{ i \in \{ 0, \ldots , n-1\} : \text{there is an obstruction on }x_{i} \}$ if $W$ is closed).
A walk without obstructions is called \textit{$H$-walk.}
If $D$ is an $H$-colored digraph and $W$ is a walk in $D$, the \textit{$H$-length of $W$}, denoted by $l_{H}(W)$, is defined as either $l_{H}(W)= |O_{H}(W)|+1$ if $W$ is open or  $l_{H}(W)= |O_{H}(W)|$ otherwise. 
Notice that the usual length is a particular case of the $H$-length when $H$ has no arcs nor loops. The $H$-length was studied in \cite{12} for closed walks and in \cite{20} for open walks.

If $l \geq 1$, a subset $S$ of vertices of $D$ is an \textit{$(l, H)$-absorbent set by walks} if for every vertex $v \in V(D) \setminus S$ there exists a $vS$-walk whose $H$-length is at most $l$. If $k \geq 2$, we say that $S$ is a \textit{$(k,H)$-independent set by walks} if for every pair of different vertices in $S$, every walk between them has $H$-length at least $k$.
If $k\geq 2$ and $l\geq 1$, we say that $S$ is a \textit{$(k, l,H)$-kernel by walks} if it is both $(k,H)$-independent by walks and $(l,H)$-absorbent by walks. If $l=k-1$, a $(k,l,H)$-kernel by walks is called a \textit{$(k, H)$-kernel by walks}. It is straightforward to see the following lemma.

 \begin{lemma}
\label{hdigraph}
If $D$ is an $H-$digraph, then every kernel by paths in $D$ is a $(k,l,H)$-kernel by walks in $D$ for every $k \geq 2$ and $l \geq 1$.
\end{lemma}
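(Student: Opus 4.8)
The plan is to exploit the fact that, when the whole digraph $D$ is an $H$-digraph, \emph{no} walk in $D$ has an obstruction, so the $H$-length of every open walk is exactly $1$ and the $H$-length of every closed walk is $0$. This collapses both defining conditions of a $(k,l,H)$-kernel by walks into statements about paths, to which the hypothesis that $S$ is a kernel by paths applies directly.

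First I would record the basic observation: since for every pair of consecutive arcs $(u,v),(v,w)$ of $D$ we have $(\rho(u,v),\rho(v,w))\in A(H)$, every walk $W$ in $D$ satisfies $O_{H}(W)=\emptyset$, and hence $l_{H}(W)=1$ when $W$ is open and $l_{H}(W)=0$ when $W$ is closed. Fix $k\ge 2$, $l\ge 1$, and let $S$ be a kernel by paths in $D$.

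To prove that $S$ is $(k,H)$-independent by walks, I would take two different vertices $u,v\in S$ and show that there is no $uv$-walk in $D$ whatsoever: any $uv$-walk contains a $uv$-path as a subsequence, which would contradict the independence by paths of $S$. Hence the requirement ``every walk between $u$ and $v$ has $H$-length at least $k$'' holds vacuously. For $(l,H)$-absorbency, I would take $x\in V(D)\setminus S$; since $S$ is absorbent by paths there is an $xS$-path $P$, which is an open walk because $x\notin S$, so $l_{H}(P)=1\le l$. Thus $P$ is an $xS$-walk of $H$-length at most $l$. Combining the two parts shows that $S$ is a $(k,l,H)$-kernel by walks for all $k\ge 2$ and $l\ge 1$.

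There is essentially no hard step here; the only point that deserves a word of care is the elementary remark that any walk between two vertices contains a path between them, which is exactly what lets the independence-by-paths hypothesis do its work. Everything else is bookkeeping with the definitions of obstruction and of $H$-length.
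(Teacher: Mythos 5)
Your proof is correct and is exactly the ``straightforward'' argument the paper has in mind (the paper states this lemma without proof): in an $H$-digraph no walk has an obstruction, so independence reduces, via the standard fact that every $uv$-walk contains a $uv$-path, to independence by paths, and absorbency follows because any $xS$-path is an open walk of $H$-length $1\le l$. No gaps.
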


It is worth mentioning that the concepts of $(k,l,H)$-kernel by paths (introduced in \cite{nearly}) and $(k,l,H)$-kernel by walks are not equivalent. In \cite{ger}, the authors showed an infinite family of digraphs with $(2,H)$-kernel by walks and no $(2,H)$-kernel by paths, and an infinite family of digraphs with $(2,H)$-kernel by paths and no $(2,H)$-kernel by walks.

Some kinds of kernels are particular cases of the $(k,l,H)$-kernels by walks in $H$-colored digraphs. For instance, if $S$ is a $(k,l,H)$-kernel by walks in an $H$-colored digraph, then: 
(i) $S$ is a kernel (von Neumann and Morgenstern \cite{2}) if $k=2$, $l=1$ and $H$ has no arcs nor loops,
(ii) $S$ is a $(k,l)$-kernel (Borowiecki and Kwa\'snik in \cite{14}) if
$H$ is a digraph without arcs nor loops, 
(iii) $S$ is a kernel by monochromatic paths (Sands, Sauer and Woodrow \cite{9}) if  $k=2$, $l=1$ and $H$ is a looped digraph and those are the only arcs in $H$, and 
(vi) $S$ is an $H$-kernel  (Arpin and Linek \cite{7}) if $k=2$ and $l=1$.

Finally, the following lemma will be useful in what follows.

\begin{lemma}
\label{noisolatedvertices}
Let $D$ be a digraph with at least one arc, $W=\{ x \in V(D): d(x)=0\}$, and $\{ k, l \} \subseteq \mathbb{N}$ such that $2\leq k$ and $1\leq l$. If $K$ is a $(k,l, H)$-kernel by walks in $D - W$, then $K \cup W$ is a $(k,l,H)$-kernel by walks in $D$.
\end{lemma}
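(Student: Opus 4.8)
The plan is to reduce everything to one structural observation: the vertices in $W$ have degree zero in $D$, so they cannot lie on any walk of positive length; hence the walks of $D$ that are relevant to the kernel conditions are precisely the walks of $D - W$, and since $D - W$ inherits its $H$-colouring from $D$, these walks have the same obstructions and the same $H$-length whether computed in $D$ or in $D - W$. Before that, I would note that the statement is meaningful: as $D$ has an arc, $D - W$ is nonempty; and $K \subseteq V(D-W) = V(D) \setminus W$, so $K \cap W = \emptyset$.

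First I would record the structural fact precisely. If $T = (x_0, \ldots, x_n)$ is a walk in $D$ with $n \geq 1$, then each $x_i$ is incident with an arc of $D$ (one of $(x_0,x_1)$, $(x_{n-1},x_n)$, $(x_{i-1},x_i)$, $(x_i,x_{i+1})$), so $d_D(x_i) \neq 0$ and $x_i \notin W$; thus $T$ is a walk of $D - W$. Moreover the colour of each arc of $T$ is the same in $D$ and in $D - W$, so $O_H(T)$ and hence $l_H(T)$ are the same in both digraphs. Conversely every walk of $D - W$ is a walk of $D$. In particular, for distinct vertices $u,v$ every $uv$-walk has length at least $1$, so the above applies to it.

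Next I would verify $(k,H)$-independence of $K \cup W$ in $D$: given distinct $u, v \in K \cup W$ and a $uv$-walk $T$ in $D$, the fact above shows $T$ avoids $W$, which forces $u, v \in K$ and exhibits $T$ as a $uv$-walk in $D - W$ of the same $H$-length; since $K$ is $(k,H)$-independent by walks in $D-W$, $l_H(T) \geq k$. Then I would verify $(l,H)$-absorbency: if $x \in V(D) \setminus (K \cup W)$ then $x \in V(D-W) \setminus K$, so the $(l,H)$-absorbency of $K$ in $D-W$ gives an $xK$-walk $T$ in $D-W$ with $l_H(T) \leq l$, and by the structural fact $T$ is an $x(K\cup W)$-walk in $D$ with the same $H$-length $\leq l$. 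These two paragraphs together give that $K \cup W$ is a $(k,l,H)$-kernel by walks in $D$.

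I do not expect a real obstacle here; the only points needing care are the claim that a positive-length walk of $D$ cannot meet $W$ (so that passing from $D-W$ to $D$ changes neither the set of relevant walks nor their $H$-lengths) and the routine bookkeeping that obstructions and $H$-length are determined arc-locally and hence are inherited unchanged by $D-W$.
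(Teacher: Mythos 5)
Your proof is correct; the paper states Lemma \ref{noisolatedvertices} without any proof, treating it as immediate, and your argument supplies exactly the intended justification. The key observation --- that a walk of positive length in $D$ only visits vertices incident with arcs, hence avoids $W$, so the walks relevant to both the independence and absorbency conditions are precisely the walks of $D-W$ with unchanged $H$-length --- is the right one, and the two verifications that follow from it are sound.
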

 
\section{First results}

In this section we will show several technical lemmas and corollaries in order to simplify the proofs of the main results.

\begin{lemma}
\label{obs1}
Let $D$ be an $H$-colored digraph, $\mathscr{F}$ an $H$-class partition of $A(D)$ and $x \in V(D)$. The following assertions hold:
	\begin{enumerate}[a)]
	\item If $d^{-}(x) \neq 0$ and $d^{+}(x) \neq 0$, then for every $F_{1} \in N^{-}_{\mathscr{F}}(x)$ and $F_{2} \in N^{+}_{\mathscr{F}}(x)$, we have that $(F_{1}, F_{2}) \in A(C_{\mathscr{F}}(D))$. 
	
	\item If $x$ is obstruction-free in $D$ and $d(x) \neq 0$, then there is a unique $F \in \mathscr{F}$ such that $x \in V(D \langle F \rangle )$. 
	
	\item If $T$ is a walk in $D$ such that $O_{H}(T)=\emptyset$, then there is a unique $F \in \mathscr{F}$ such that $A(T) \subseteq F$.
	
	\item If $u$, $v$ and $w$ are three different vertices in $D$, $T$ is a $uv-$walk, and $T'$ is a $vw$-$H$-walk, then either $l_{H}(T \cup T')=l_{H}(T)$ or $l_{H}(T \cup T') = l_{H}(T) +1$.
	\end{enumerate}
\end{lemma}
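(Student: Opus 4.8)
\medskip
The plan is to extract, once and for all, the only mechanism these four statements need: the ``if'' half of the definition of an $H$-class partition, namely that whenever $(p,q),(q,r)\in A(D)$ with $(\rho(p,q),\rho(q,r))\in A(H)$ there is a class $F\in\mathscr{F}$ containing both arcs, combined with the observation that, as $\mathscr{F}$ partitions $A(D)$, every arc of $D$ lies in exactly one class. With that in hand, part (a) is immediate: $d^{-}(x)\neq 0$ and $d^{+}(x)\neq 0$ make $N^{-}_{\mathscr{F}}(x)$ and $N^{+}_{\mathscr{F}}(x)$ nonempty, so given $F_{1}\in N^{-}_{\mathscr{F}}(x)$ and $F_{2}\in N^{+}_{\mathscr{F}}(x)$ one may pick $(u,x)\in F_{1}$ and $(x,v)\in F_{2}$; this pair is consecutive at $x$, hence is precisely the witness required by the definition of $A(C_{\mathscr{F}}(D))$, so $(F_{1},F_{2})\in A(C_{\mathscr{F}}(D))$. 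No case split is needed, and the argument also yields the loop $(F_{1},F_{1})$ when $F_{1}=F_{2}$.

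For (b) I would split on the in- and out-degree of $x$. In the principal case $d^{-}(x)\neq 0\neq d^{+}(x)$, fix an in-arc $a_{0}$ and an out-arc $b_{0}$ of $x$; for any in-arc $a$ and out-arc $b$, obstruction-freeness gives $(\rho(a),\rho(b))\in A(H)$, so each of the pairs $\{a,b_{0}\}$, $\{a_{0},b_{0}\}$, $\{a_{0},b\}$ lies in one class, and since the class of $a_{0}$ and the class of $b_{0}$ are unique, all arcs incident with $x$ end up in a single class $F$; hence $F$ is the only class with $x\in V(D\langle F\rangle)$. The degenerate cases $d^{-}(x)=0$ or $d^{+}(x)=0$ are where I expect trouble: then obstruction-freeness is vacuous on the arcs at $x$ (which are pairwise non-consecutive), so two incident arcs could a priori sit in different classes, and the conclusion has to be secured from the context in which the lemma is used --- in the applications the relevant classes induce strongly connected or sink-free digraphs, so $x$ is neither a source nor a sink --- and I would flag that one may want the hypothesis ``$d^{-}(x)\neq 0\neq d^{+}(x)$'' rather than ``$d(x)\neq 0$'' here.

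For (c), writing $T=(x_{0},\ldots,x_{n})$ with consecutive arcs $a_{1},\ldots,a_{n}$, emptiness of $O_{H}(T)$ gives $(\rho(a_{i}),\rho(a_{i+1}))\in A(H)$ for every $i$, so consecutive arcs lie in a common class; chaining along $T$ and invoking uniqueness of each arc's class puts all $a_{j}$ in one $F$, whence $A(T)\subseteq F$, and $a_{1}\in F$ forces uniqueness (if $T$ is closed, the wrap-around obstruction being absent is consistent and contributes nothing). For (d), write $T=(x_{0},\ldots,x_{m})$ with $x_{0}=u,x_{m}=v$ and $T'=(y_{0},\ldots,y_{p})$ with $y_{0}=v,y_{p}=w$; since $u,v,w$ are pairwise distinct, $m,p\geq 1$ and both $T$ and $T\cup T'$ are open walks. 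I would then compare $O_{H}$ position by position along $T\cup T'$: the positions $1,\ldots,m-1$ carry exactly the obstructions of $T$, the positions $m+1,\ldots,m+p-1$ carry those of $T'$ (none, since $T'$ is an $H$-walk), and only position $m$, the junction vertex $v$, is new and is or is not an obstruction; hence $O_{H}(T)\subseteq O_{H}(T\cup T')\subseteq O_{H}(T)\cup\{m\}$, so $|O_{H}(T\cup T')|$ equals $|O_{H}(T)|$ or $|O_{H}(T)|+1$, and as both walks are open, $l_{H}(T\cup T')=|O_{H}(T\cup T')|+1\in\{l_{H}(T),\,l_{H}(T)+1\}$. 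The obstruction bookkeeping in (d) and the closed-walk check in (c) are routine; the genuinely delicate point is the source/sink case of (b).
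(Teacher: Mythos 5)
Your proposal is correct and, for parts (a), (c) and (d), follows essentially the same route as the paper: the same witness argument for (a), the same chaining of consecutive arcs through common classes for (c), and the same containment $O_{H}(T)\subseteq O_{H}(T\cup T')\subseteq O_{H}(T)\cup\{m\}$ for (d). For (b) you are actually more careful than the paper, which disposes of the claim in one line (``since $x$ is obstruction-free, $A(x)\subseteq F$''); your fix-and-chain argument via a reference in-arc $a_{0}$ and out-arc $b_{0}$ is the justification that line needs, and the degenerate case you flag is a genuine defect of the statement as written: if $x$ is a source (or sink) with two out-arcs (in-arcs), these arcs are never consecutive, so the $H$-class partition condition does not force them into one class and $x$ can lie in $V(D\langle F\rangle)$ for two distinct classes while being vacuously obstruction-free. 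Your observation that the hypothesis should read $d^{-}(x)\neq 0\neq d^{+}(x)$ is exactly right, and it is harmless for the paper since the only application of (b) (in Proposition \ref{propstrong2}) concerns vertices of strongly connected arc-induced subdigraphs, where both degrees are nonzero.
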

\begin{proof}
\begin{enumerate}[a)]
\item If $F_{1} \in N^{-}_{\mathscr{F}}(x)$ and $F_{2} \in N^{+}_{\mathscr{F}}(x)$, then there exists $\{u, v \} \subseteq V(D)$ such that $(u,x) \in F_{1}$ and $(x,v) \in F_{2}$. It follows from the definition of $C_{\mathscr{F}}(D)$ that $(F_{1}, F_{2}) \in A(C_{\mathscr{F}}(D))$.

\item Since $d(x) \neq 0$, then there exists $F \in \mathscr{F}$ such that $x \in V(D\langle F \rangle )$. On the other hand, since $x$ is obstruction-free, we have that $A(x) \subseteq F$, concluding that $F$ is unique.

\item If $T=(x_{0}, \ldots , x_{n})$, then $(\rho (x_{i-1}, x_{i}), \rho (x_{i}, x_{i+1})) \in A(H)$ for every $i \in \{0, \ldots , n-1\}$ (indices modulo $n$ if $x_{0}=x_{n}$). Hence, it follows from the definition of $H$-class partition that there is a unique $F \in \mathscr{F}$ such that $A(T) \subseteq F$.

\item Suppose that $T=(z_{0} = u, z_{1}, \ldots , z_{n}=v)$ and $T'=(z_{n} = v, z_{n+1}, \ldots , z_{m} =w)$. It is straightforward to see that $O_{H}(T \cup T') \subseteq  O_{H}(T) \cup \{ n \}$ and $O_{H}(T) \subseteq O_{H}(T \cup T')$, which implies that either $l_{H}(T \cup T')=l_{H}(T)$ or $l_{H}(T \cup T') = l_{H}(T) +1$.
\end{enumerate}
\end{proof}

\begin{lemma}
\label{c1.l1}
Let $D$ be an $H$-colored digraph and $\mathscr{F}$ an $H$-class partition of $A(D)$. If $\mathcal{S}$ is an independent set in $C_{\mathscr{F}}(D)$, then $D\langle \cup _{F \in  \mathcal{S}} F\rangle$ is an $H$-subdigraph of $D$.
\end{lemma}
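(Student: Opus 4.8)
The plan is to unwind the definitions directly; there is essentially no hidden difficulty here. Write $D' := D\langle \bigcup_{F \in \mathcal{S}} F \rangle$, which is a subdigraph of $D$ by the very definition of an arc-induced subdigraph. To show it is an $H$-digraph, I would take an arbitrary pair of consecutive arcs $(u,v)$ and $(v,w)$ of $D'$ and aim to prove that $(\rho(u,v), \rho(v,w)) \in A(H)$.

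First I would locate these arcs in the partition: since both $(u,v)$ and $(v,w)$ lie in $A(D') = \bigcup_{F \in \mathcal{S}} F$, there exist $F, G \in \mathcal{S}$ with $(u,v) \in F$ and $(v,w) \in G$. Because $(u,v) \in F$ and $(v,w) \in G$ share the vertex $v$, the definition of the $H$-class digraph gives $(F,G) \in A(C_{\mathscr{F}}(D))$.

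Now I would split into two cases according to whether $F = G$. If $F \neq G$, then $F$ and $G$ are two different vertices of $C_{\mathscr{F}}(D)$, both belonging to $\mathcal{S}$, joined by the arc $(F,G)$; this contradicts the independence of $\mathcal{S}$ in $C_{\mathscr{F}}(D)$ (note that a loop of $C_{\mathscr{F}}(D)$ would not cause such a contradiction, which is exactly why the other case must be handled separately). Hence $F = G$, and then $\{(u,v),(v,w)\} \subseteq F$ for a single class $F \in \mathscr{F}$, so the definition of an $H$-class partition yields $(\rho(u,v), \rho(v,w)) \in A(H)$, as desired.

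Since $(u,v)$ and $(v,w)$ were an arbitrary consecutive pair in $D'$, this shows $D'$ is an $H$-digraph, and being a subdigraph of $D$ it is an $H$-subdigraph of $D$. The only point requiring a moment's care — the ``main obstacle,'' such as it is — is the case distinction $F = G$ versus $F \neq G$, since $C_{\mathscr{F}}(D)$ may have loops and independence only forbids arcs between distinct vertices.
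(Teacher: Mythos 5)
Your proof is correct and follows essentially the same route as the paper's: locate the two consecutive arcs in classes $F,G \in \mathcal{S}$, observe $(F,G) \in A(C_{\mathscr{F}}(D))$, use independence of $\mathcal{S}$ to force $F=G$, and then invoke the definition of an $H$-class partition. Your explicit remark about loops not violating independence is a small clarification the paper leaves implicit, but the argument is the same.
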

\begin{proof}
Let $D'=D\langle \cup _{F \in  \mathcal{S}} F\rangle$ and  $\{ (u,v), (v,x)\} \subseteq A(D')$. We will prove that $ ( \rho (u,v), \rho (v,x)) \in A(H)$. 
By definition of $D'$, there exists $\{ F,G\} \subseteq \mathcal{S}$ such that $(u,v) \in F$ and $(v,x) \in G$. Hence, we have that $(F,G) \in A(C_{\mathscr{F}}(D))$. Since $\mathcal{S}$ is an independent set in $C_{\mathscr{F}}(D)$, then $F=G$, which implies that $\{ (u,v), (v,x) \} \subseteq F$.
It follows from the fact that $\mathscr{F}$ is an $H$-class partition of $A(D)$ that $(\rho (u,v) , \rho (v,x)) \in A(H)$. Therefore, $D'$ is an $H$-subdigraph of $D$.
\end{proof}

The following lemmas will show some properties of the $H$-class partitions and the $H$-class digraph related to connectivity.

\begin{lemma}
\label{c1.l2}
Let $D$ be an $H$-colored digraph and $\mathscr{F}$ an $H$-class partition of $A(D)$. If $D$ is strongly connected, then $C_{\mathscr{F}}(D)$ is strongly connected.
\end{lemma}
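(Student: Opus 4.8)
The plan is to prove the equivalent statement that for arbitrary $F,G\in\mathscr{F}=V(C_{\mathscr{F}}(D))$ there is an $FG$-path in $C_{\mathscr{F}}(D)$; by symmetry in the roles of $F$ and $G$ this simultaneously yields a $GF$-path, giving strong connectivity. If $A(D)=\emptyset$ then $\mathscr{F}=\emptyset$ and $C_{\mathscr{F}}(D)$ is the empty digraph, hence trivially strongly connected, so we may assume $D$ has at least one arc.

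First I would choose arcs $(u_{0},u_{1})\in F$ and $(v_{0},v_{1})\in G$. Since $D$ is strongly connected there is a $u_{1}v_{0}$-walk $P$ in $D$ (the trivial walk when $u_{1}=v_{0}$), and concatenation produces a walk $W=(x_{0},x_{1},\ldots,x_{n})=(u_{0},u_{1})\cup P\cup(v_{0},v_{1})$ in $D$ with $n\geq 2$, $(x_{0},x_{1})=(u_{0},u_{1})$, and $(x_{n-1},x_{n})=(v_{0},v_{1})$. Because $\mathscr{F}$ is a partition of $A(D)$, for each $i\in\{1,\ldots,n\}$ there is a unique class $G_{i}\in\mathscr{F}$ with $(x_{i-1},x_{i})\in G_{i}$; in particular $G_{1}=F$ and $G_{n}=G$.

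The key observation is then that $(G_{1},\ldots,G_{n})$ is a walk in $C_{\mathscr{F}}(D)$. Indeed, fix $i\in\{1,\ldots,n-1\}$; then $(x_{i-1},x_{i})\in G_{i}$, $(x_{i},x_{i+1})\in G_{i+1}$, and $\{x_{i-1},x_{i},x_{i+1}\}\subseteq V(D)$, so $(G_{i},G_{i+1})\in A(C_{\mathscr{F}}(D))$ straight from the definition of the $H$-class digraph (alternatively, apply Lemma \ref{obs1}(a) at the vertex $x_{i}$, which has $d^{-}(x_{i})\neq 0$ and $d^{+}(x_{i})\neq 0$). The case $G_{i}=G_{i+1}$ is harmless: that step simply traverses a loop of $C_{\mathscr{F}}(D)$. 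Hence there is an $FG$-walk in $C_{\mathscr{F}}(D)$, and therefore an $FG$-path, as required.

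There is no genuinely hard step here; the only care needed is with the degenerate cases ($D$ arcless, $u_{1}=v_{0}$, $(u_{0},u_{1})=(v_{0},v_{1})$, or $F=G$), all of which are absorbed by allowing trivial walks and by the standard fact that a walk between two vertices contains a path between them. Note that the argument uses only the definition of $C_{\mathscr{F}}(D)$ together with the fact that an $H$-class partition is in particular a partition of $A(D)$; the ``if and only if'' clause in the definition of an $H$-class partition plays no role in this proof.
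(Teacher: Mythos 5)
Your proof is correct. It follows the same basic strategy as the paper's proof of Lemma \ref{c1.l2} --- pick an arc of $F$ and an arc of $G$, join them by a walk in $D$ using strong connectivity, and project that walk down to $C_{\mathscr{F}}(D)$ --- but the projection is carried out differently, and in a way that is genuinely more elementary. The paper records a new vertex of $C_{\mathscr{F}}(D)$ only at each obstruction of the walk: it first invokes Lemma \ref{obs1}(c) to see that $O_{H}(C)\neq\emptyset$ when the two classes differ, and then relies on the fact that between consecutive obstructions all arcs lie in a single class (this is exactly where the ``if and only if'' clause in the definition of an $H$-class partition enters) to conclude that the sequence of classes read off at the obstructions forms a walk ending at $F'$. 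You instead record the class of \emph{every} arc of the walk and accept loops of $C_{\mathscr{F}}(D)$ whenever two consecutive arcs lie in the same class; since each such pair of arcs itself witnesses the loop as an arc of $C_{\mathscr{F}}(D)$, the resulting sequence is an $FG$-walk, from which an $FG$-path is extracted. Your closing observation is accurate and worth noting: your argument uses only that $\mathscr{F}$ is a partition of $A(D)$, so the conclusion holds for the class digraph of an arbitrary arc partition, whereas the paper's argument is tied to the obstruction structure. What the paper's bookkeeping buys is uniformity: the same decomposition of a walk by its obstructions (with $l_{H}(T)=l(T')+1$) is the engine of Propositions \ref{propgird2}, \ref{unilateraltheorem} and \ref{propstrong2}, so the authors reuse it here; your version buys brevity and weaker hypotheses.
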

\begin{proof}
Let $F$ and $F'$ be different vertices in $C_{\mathscr{F}}(D)$, and $\{ (x_{0}, x_{1} ), (z_{0},z_{1})\} \subseteq A(D)$ such that $(x_{0}, x_{1}) \in F$ and $(z_{0}, z_{1}) \in F'$. It follows from the fact that $D$ is strongly connected, that there exists an $x_{0}z_{1}$-walk in $D$, say $C=(w_{0}, \ldots , w_{n})$, whose initial arc is $(x_{0}, x_{1})$ and ending arc is $(z_{0}, z_{1})$.

Since $(w_{0}, w_{1}) \in F$, $(w_{n-1}, w_{n}) \in F'$, and $F \neq F'$, we can conclude that $O_{H}(C) \neq \emptyset$ (Lemma \ref{obs1}) (c)). Let $O_{H}(C)=\{ \alpha_{1}, \ldots , \alpha_{r} \}$ for some $r \geq 1$, and we can assume that $\alpha_{i} \leq \alpha_{i+1}$ whenever  $i \in \{ 1, \ldots , r-1 \}$. For every $i \in \{ 1, \ldots , r \}$, let $G_{i} \in \mathscr{F}$ such that $(w_{\alpha_{i}}, w_{\alpha_{i}}^{+}) \in G_{i}$. Notice that $P=(G_{1}, \ldots , G_{r})$ is a walk in $C_{\mathscr{F}}(D)$, and $G_{r}=F'$.

 If $\alpha_{1}=0$, then $F=G_{1}$, which implies that $P$ is an $FF'$-walk in $C_{\mathscr{F}}(D)$. If $\alpha_{1}\neq 0$, then $(F, G_{1})\in A(C_{\mathscr{F}}(D))$, which implies that $(F, G_{1}) \cup P$ is an $FF'$-walk in $C_{\mathscr{F}}(D)$. Hence, we conclude that $C_{\mathscr{F}}(D)$ is strongly connected.
\end{proof}

\begin{lemma}
\label{unilateral.lema}
Let $D$ be an $H$-colored digraph, $\mathscr{F}$ an $H$-class partition of $A(D)$,  $\mathcal{S}$ a non-empty subset of $V(C_{\mathscr{F}}(D))$ such that $D\langle F \rangle$ is unilateral for every $F\in \mathcal{S}$, $K$ a kernel by walks in $D\langle \cup_{F\in \mathcal{S}} F \rangle$, and $\{x,z\} \subseteq K$. If  $x \in V(D\langle F_{1} \rangle )$  and $z \in V(D \langle F_{2} \rangle)$ for some $\{F_{1}, F_{2}\} \subseteq \mathcal{S}$, then $F_{1} \neq F_{2}$.
\end{lemma}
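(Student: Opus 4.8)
The plan is a short proof by contradiction. Suppose the conclusion fails, so $F_{1}=F_{2}=:F$, and hence both $x$ and $z$ lie in $V(D\langle F \rangle)$. Since $D\langle F \rangle$ is unilateral and $x\neq z$, there is, without loss of generality, an $xz$-path $P$ in $D\langle F \rangle$; in particular $A(P) \subseteq F$. As $F \in \mathcal{S}$ we have $A(P) \subseteq \bigcup_{G \in \mathcal{S}} G$, so $P$ is also a walk in $D\langle \bigcup_{G \in \mathcal{S}} G \rangle$, the digraph in which $K$ is a kernel by walks.

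Next I would verify that $P$ carries no obstruction. Let $v$ be any internal vertex of $P$, with predecessor $u$ and successor $w$ along $P$. Then $\{(u,v),(v,w)\} \subseteq A(P) \subseteq F$, so, since $\mathscr{F}$ is an $H$-class partition of $A(D)$, the defining property of such a partition gives $(\rho(u,v),\rho(v,w)) \in A(H)$; that is, there is no obstruction on $v$. As $v$ was an arbitrary internal vertex, $O_{H}(P)=\emptyset$, and since $P$ is an open walk, $l_{H}(P)=|O_{H}(P)|+1=1$.

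Finally, $P$ is a walk in $D\langle \bigcup_{G \in \mathcal{S}} G \rangle$ joining the two distinct vertices $x,z$ of the kernel by walks $K$, and $l_{H}(P)=1<2$. This contradicts the fact that $K$ is $(2,H)$-independent by walks (the independence condition in the definition of a kernel by walks is the case $k=2$ of $(k,H)$-independence by walks). Hence $F_{1}\neq F_{2}$. I expect the only points needing care to be purely bookkeeping: making sure ``kernel by walks'' is read as the $k=2$, $l=1$ instance so that the $H$-length-$1$ path $P$ genuinely violates independence, and the single essential use of the hypotheses, namely that the $H$-class partition property forces a path lying inside one class to be obstruction-free (which is exactly why unilaterality of each $D\langle F\rangle$, rather than mere connectivity, is invoked).
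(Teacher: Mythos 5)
Your proof is correct and follows essentially the same route as the paper's: assume $F_{1}=F_{2}$, use unilaterality to produce a path $P$ between $x$ and $z$ inside $D\langle F_{1}\rangle \subseteq D\langle \cup_{F\in \mathcal{S}}F\rangle$, and contradict the independence of $K$. The only difference is that the paper reads ``kernel by walks'' here as the plain kernel by paths and contradicts path-independence of $K$ immediately, so your extra verification that $P$ is obstruction-free via the $H$-class partition property is harmless but not needed; and since path-independence implies $(2,H)$-independence by walks, your version of the contradiction also covers the paper's reading.
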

\begin{proof}
Proceeding by contradiction, suppose that $F_{1} = F_{2}$. Since $D\langle F_{1} \rangle$ is unilateral, then either there exists an $xz$-path in $D \langle F_{1} \rangle$ or there exists a $zx$-path in $D\langle F_{1} \rangle$, say $P$. It follows that $P$ is a path in $D\langle \cup_{F\in \mathcal{S}} F \rangle$ such that $\{x,z\} \subseteq K$, which contradicts the independence by paths of $K$. Therefore, $F_{1} \neq F_{2}$.
\end{proof}

\begin{lemma}
\label{lemmastronglyconnected}
Let $D$ be an $H$-colored digraph and $\mathscr{F}$ an $H$-class partition of $A(D)$ such that for every $F \in \mathscr{F}$, $D\langle F \rangle$ is strongly connected. The following assertions hold:
\begin{enumerate}[a)]
\item $\mathscr{F}$ is walk-preservative.

\item $C_{\mathscr{F}}(D)$ is a symmetric digraph.

\item If $\mathcal{S} \subseteq V(C_{\mathscr{F}}(D) )$ is an independent set in $C_{\mathscr{F}}(D)$ and $\{F_{1}, F_{2} \} \subseteq \mathcal{S}$, then $V(D \langle F_{1} \rangle ) \cap V(D \langle F_{2} \rangle) = \emptyset.$
\end{enumerate}
\end{lemma}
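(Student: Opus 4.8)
The plan is to derive all three parts from one simple observation. If $(F,G)\in A(C_{\mathscr{F}}(D))$, then by definition there are arcs $(u,v)\in F$ and $(v,w)\in G$, so the ``middle'' vertex $v$ lies in $V(D\langle F\rangle)\cap V(D\langle G\rangle)$. Moreover, since $D$ is loopless while every class of $\mathscr{F}$ is non-empty, each $D\langle F\rangle$ has at least two vertices, so, being strongly connected, it has positive in-degree and positive out-degree at every one of its vertices. From here the three statements follow quickly.

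For part a), I would take $(F,G)\in A(C_{\mathscr{F}}(D))$ and $z\in V(D\langle F\rangle)$, extract the common vertex $v\in V(D\langle F\rangle)\cap V(D\langle G\rangle)$ as above, and use strong connectivity of $D\langle F\rangle$ to obtain a $zv$-path in $D\langle F\rangle$ (the trivial path if $z=v$). Since $v\in V(D\langle G\rangle)$, this is exactly the path required by the definition of walk-preservative, so $\mathscr{F}$ is walk-preservative.

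For part b), given $(F,G)\in A(C_{\mathscr{F}}(D))$ I would again produce $v\in V(D\langle F\rangle)\cap V(D\langle G\rangle)$. Strong connectivity of $D\langle G\rangle$ yields an arc $(a,v)\in G$ and strong connectivity of $D\langle F\rangle$ yields an arc $(v,b)\in F$; the consecutive pair $(a,v)\in G$, $(v,b)\in F$ then gives $(G,F)\in A(C_{\mathscr{F}}(D))$ directly from the definition of the $H$-class digraph (equivalently, one may apply part a) of Lemma \ref{obs1} with $G\in N^{-}_{\mathscr{F}}(v)$ and $F\in N^{+}_{\mathscr{F}}(v)$). Loops are their own reverses, so this shows $C_{\mathscr{F}}(D)$ is symmetric.

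For part c), I would argue by contradiction: assuming $F_{1}\neq F_{2}$ both lie in the independent set $\mathcal{S}$ and that some $v\in V(D\langle F_{1}\rangle)\cap V(D\langle F_{2}\rangle)$, strong connectivity of $D\langle F_{1}\rangle$ furnishes an arc $(a,v)\in F_{1}$ and that of $D\langle F_{2}\rangle$ an arc $(v,b)\in F_{2}$, whence $(F_{1},F_{2})\in A(C_{\mathscr{F}}(D))$, contradicting the independence of $\mathcal{S}$; hence $V(D\langle F_{1}\rangle)\cap V(D\langle F_{2}\rangle)=\emptyset$. I do not expect a genuine obstacle: the only points needing care are the degenerate cases ($z=v$ in a), a loop $F=G$ in b)) and the remark that non-emptiness of the classes is what makes ``strongly connected'' force positive in- and out-degrees at each vertex, all of which are one-liners.
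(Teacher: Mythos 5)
Your proposal is correct and follows essentially the same route as the paper's proof: extract the common middle vertex $v\in V(D\langle F\rangle)\cap V(D\langle G\rangle)$ from an arc of $C_{\mathscr{F}}(D)$, then use that each class induces a non-trivial strongly connected digraph to produce the required path for a), the reversed pair of consecutive arcs for b), and the contradicting arc for c). The paper phrases the positive in/out-degree point as the induced digraphs being ``non-trivial strongly connected'' rather than via looplessness of $D$, but the content is identical.
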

\begin{proof}
a)  Let $(F, G) \in A(C_{\mathscr{F}}(D) )$ and $x \in V(D \langle F \rangle )$. It follows from the definition of $C_{\mathscr{F}}(D)$ that there exists $\{ u, v, z \} \subseteq V(D)$ such that $(u,v) \in F$ and $(v,z) \in G$. Notice that $v \in V(D\langle F \rangle ) \cap V(D \langle G \rangle )$. Since $D\langle F \rangle$ is strongly connected, then there exists an $xv$-walk in $D \langle F \rangle$, which implies that $\mathscr{F}$ is walk-preservative.

b) Let $(F_{1}, F_{2}) \in A(C_{\mathscr{F}}(D) )$. It follows from the definition of $C_{\mathscr{F}}(D)$ that there exists $\{u, v, z \}\subseteq V(D)$ such that $(u,v) \in F_{1}$ and $(v,z) \in F_{2}$. 
Since $(u,v) \in A(D\langle F_{1} \rangle )$, then  $D\langle F_{1} \rangle$ is a nontrivial strongly connected digraph, which implies that  there exists $u' \in V(D\langle F_{1} \rangle)$ such that $(v,u') \in F_{1}$. In the same way,  there exists $z' \in V(D \langle F_{2} \rangle )$ such that $(z',v) \in F_{2}$. It follows from the definition of $C_{\mathscr{F}}(D)$ that $(F_{2}, F_{1}) \in  A(C_{\mathscr{F}}(D) )$, concluding that $C_{\mathscr{F}}(D)$ is a symmetric digraph.
   
c) Proceeding by contradiction, suppose that  $V(D \langle F_{1} \rangle ) \cap V(D \langle F_{2} \rangle) \neq \emptyset$, and consider $x \in  V(D \langle F_{1} \rangle ) \cap V(D \langle F_{2} \rangle)$. 
Since $D \langle F_{1} \rangle$ and $D \langle F_{2} \rangle$ are non-trivial strongly connected digraphs, then there exist $u \in V( D \langle F_{1} \rangle )$ and $z \in V(D \langle F_{2} \rangle )$ such that $(u,x) \in A(D\langle F_{1} \rangle)$ and $(x,z) \in A(D \langle F_{2} \rangle )$. It follows from the definition of $C_{\mathscr{F}}(D)$ that $(F_{1}, F_{2}) \in A(C_{\mathscr{F}}(D) )$, which is not possible since $\mathcal{S}$ is an independent set in $C_{\mathscr{F}}(D)$. Therefore, $V(D \langle F_{1} \rangle ) \cap V(D \langle F_{2} \rangle) = \emptyset.$
\end{proof}

\begin{theorem}
\label{classlema}
If $D$ is an $H$-colored digraph such that for every vertex $x \in V(D)$, there exists an $xw$-$H$-walk for some obstruction-free vertex $w$ in $D$, then $D$ has a kernel by $H$-walks.
\end{theorem}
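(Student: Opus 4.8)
The plan is to transfer the problem to a transitive auxiliary digraph supported on the obstruction-free vertices of $D$. Write $\Omega$ for the set of obstruction-free vertices of $D$; if $\Omega=\emptyset$ the hypothesis forces $V(D)=\emptyset$ and the empty set is vacuously a kernel by $H$-walks, so assume $\Omega\neq\emptyset$. Define a digraph $\Gamma$ with $V(\Gamma)=\Omega$ by putting $(u,v)\in A(\Gamma)$ exactly when $u\neq v$ and $D$ has a $uv$-$H$-walk. I will show that any kernel of $\Gamma$ is a kernel by $H$-walks of $D$, i.e. a set $S$ with no $H$-walk between two distinct vertices of $S$ (equivalently, $S$ is $(2,H)$-independent by walks) and such that every $x\in V(D)\setminus S$ is joined to $S$ by an $H$-walk (equivalently, $S$ is $(1,H)$-absorbent by walks).

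The first step is to check that $\Gamma$ is transitive. This rests on the single observation that obstruction-free vertices splice $H$-walks: if $T$ is a $uv$-$H$-walk and $T'$ a $vw$-$H$-walk with $v\in\Omega$, then in $T\cup T'$ every internal vertex other than the junction $v$ is already internal to $T$ or to $T'$ and hence carries no obstruction, while $v$ carries none because it is obstruction-free and so the arc of $T$ entering $v$ together with the arc of $T'$ leaving $v$ form an arc of $H$; thus $O_{H}(T\cup T')=\emptyset$. Consequently, if $(u,v),(v,w)\in A(\Gamma)$ with $u\neq w$ then $(u,w)\in A(\Gamma)$, so $\Gamma$ is transitive, and by the classical theorem of König that every transitive digraph has a kernel \cite{kon}, $\Gamma$ has a kernel $S$.

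The second step is to verify the two kernel conditions for $S$ in $D$. Independence is immediate: if distinct $u,v\in S$ were joined by an $H$-walk in $D$, then $(u,v)\in A(\Gamma)$ with both endpoints in the kernel $S$, contradicting its independence in $\Gamma$. For absorbency, take $x\in V(D)\setminus S$; by hypothesis there is an $xw$-$H$-walk $W$ with $w\in\Omega$. If $w\in S$, then $W$ itself is an $xS$-$H$-walk (and $x\neq w$ since $x\notin S\ni w$, so $W$ is open with $l_{H}(W)=1$). If $w\notin S$, then $w\in\Omega\setminus S$, so the absorbency of $S$ as a kernel of $\Gamma$ gives $k\in S$ with $(w,k)\in A(\Gamma)$, i.e. a $wk$-$H$-walk $W'$; if $x=w$ take $W'$ directly, and otherwise the splicing observation applied at $w$ shows that $W\cup W'$ is an $xk$-$H$-walk. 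In every case $x$ reaches a vertex of $S$ by an $H$-walk whose endpoints differ, hence of $H$-length $1$, so $S$ is $(1,H)$-absorbent by walks.

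I expect the only delicate points to be bookkeeping: handling the degenerate case $x=w$ in the absorbency step, noting that a vertex of $\Omega$ occurring several times along a walk causes no trouble, and recording that an $H$-walk between two distinct vertices has $H$-length exactly $1$, which is what the definition of $(1,H)$-absorbency demands. There is no real obstacle beyond the splicing lemma for obstruction-free vertices, which is precisely what makes ``reachable by an $H$-walk'' a transitive relation on $\Omega$ and lets König's theorem finish the argument.
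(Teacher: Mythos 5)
Your proposal is correct and follows essentially the same route as the paper: build the auxiliary digraph on the obstruction-free vertices with arcs given by $H$-walk reachability, observe that obstruction-free junction vertices make this digraph transitive, apply K\"onig's theorem to get a kernel there, and pull it back. The only differences are cosmetic (explicitly excluding loops in $\Gamma$ and handling the degenerate cases $\Omega=\emptyset$ and $x=w$), which the paper leaves implicit.
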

\begin{proof}
First, we define the digraph $D'$ whose vertex set consists of the obstruction-free vertices of $D$, and $(x,z) \in A(D')$ if and only if there exists an $xz$-$H$-walk in $D$. Now, we will show that $D'$ has a kernel, say $K$, by proving that $D'$ is a transitive digraph. Then, a simple proof will show that $K$ is a kernel by $H$-walks in $D$.

In order to show that $D'$ is a transitive digraph, consider $\{(u,v), (v,w) \} \subseteq A(D')$. It follows from the definition of $D'$ that there exists a $uv$-$H$-walk in $D$, say $W_{1}$, and a $vw$-$H$-walk in $D$, say $W_{2}$. Since $v$ is an obstruction-free vertex in $D$, then we have that $W_{1} \cup W_{2}$ is a $uw$-$H$-walk in $D$, which implies that $(u,w) \in A(D')$, concluding that $D'$ is a transitive digraph.  

Since $D'$ is a transitive digraph, consider a kernel in $D'$, say $K$. We will show that $K$ is a kernel by $H$-walks in $D$. It follows from the definition of $D'$ and the fact that $K$ is an independent set in $D'$, that $K$ is an independent set by $H$-walks in $D$. It only remains to show that $K$ is an absorbent set by $H$-walks in $D$.

 Consider $x \in V(D)\setminus K$. If $x$ is an obstruction-free vertex in $D$, then $x \in V(D')$ and, since $K$ is a kernel in $D'$, there exists $w \in K$ such that $(x,w) \in A(D')$, which implies that there exists an $xw$-$H$-walk in $D$. If $x \notin V(D')$, then by hypothesis, there exists $z \in V(D')$ and an $xz$-$H$-walk in $D$, say $W_{1}$. If $z \in K$, then $W_{1}$ is an $xK$-$H$-walk in $D$. If $z \notin K$, then there exists $w \in K$ such that $(z,w) \in A(D')$, which implies that there exists a $zw$-$H$-walk in $D$, say $W_{2}$. Since $z$ is an obstruction-free vertex in $D$, then $W_{1} \cup W_{2}$ is an $xK$-$H$-walk in $D$, concluding that $K$ is a kernel by $H$-walks in $D$.
\end{proof}

Two simple but interesting corollaries of the previous result can be shown considering particular patterns.

\begin{cor}
If $D$ is an $m$-colored digraph such that for every vertex $x \in V(D)$, there exists a monochromatic $xw$-walk with color $c$, for some vertex $w$ such that every arc in $A(w)$ has color $c$, then $D$ has a kernel by monochromatic paths.
\end{cor}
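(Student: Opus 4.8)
The plan is to realize kernels by monochromatic paths as a special case of kernels by $H$-walks and then invoke Theorem \ref{classlema}. Let $m$ be the number of colors appearing on the arcs of $D$, and let $H$ be the digraph whose vertex set is the set of these $m$ colors and whose arc set consists precisely of one loop at each color, so that $(c_{1},c_{2})\in A(H)$ if and only if $c_{1}=c_{2}$. Under this choice $D$ becomes an $H$-colored digraph, and I would first record the following dictionary, all of whose entries are immediate from the definitions together with the fact that the only arcs of $H$ are its loops: (i) a walk in $D$ has no obstruction (i.e.\ is an $H$-walk) if and only if all of its arcs have the same color, that is, if and only if it is a monochromatic walk; and (ii) a vertex $w$ is obstruction-free in $D$ if and only if $\rho(a)=\rho(b)$ for every $a\in A^{-}(w)$ and $b\in A^{+}(w)$, which holds exactly when all arcs in $A(w)$ share a common color.

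With this dictionary, the hypothesis of the corollary says precisely that for every $x\in V(D)$ there exists an $xw$-$H$-walk for some obstruction-free vertex $w$ of $D$: the monochromatic $xw$-walk of color $c$ is an $H$-walk because consecutive arcs have equal color $c$ and $(c,c)\in A(H)$, and $w$ is obstruction-free because every arc in $A(w)$ has color $c$. Hence Theorem \ref{classlema} applies and yields a set $K\subseteq V(D)$ that is a kernel by $H$-walks in $D$, i.e.\ a set that is independent and absorbent by monochromatic walks.

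It then remains to pass from ``monochromatic walks'' to ``monochromatic paths''. For independence this is free: if two distinct vertices $x,y$ of $K$ were joined by a monochromatic path, that path would be a monochromatic $xy$-walk, hence an $H$-walk between two vertices of $K$, contradicting that $K$ is independent by $H$-walks. For absorbency, take $x\in V(D)\setminus K$ and an $xK$-$H$-walk $W$; let $w$ be the first vertex of $W$ lying in $K$ and let $W'$ be the initial segment of $W$ from $x$ to $w$. Then $W'$ is a monochromatic $xw$-walk, and deleting closed subwalks from $W'$ one at a time — each deletion leaving all remaining arcs of the same color — produces a monochromatic $xw$-path, hence a monochromatic $xK$-path. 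Therefore $K$ is a kernel by monochromatic paths in $D$.

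I do not anticipate a genuine obstacle here; the content of the corollary is the translation into the language of $H$-colored digraphs, and the one point needing a line of care is the walk-to-path reduction in the absorbency argument, which is routine because removing a closed subwalk from a monochromatic walk leaves a monochromatic walk, so the iteration terminates at a monochromatic path.
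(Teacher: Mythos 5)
Your proposal is correct and follows essentially the same route as the paper: define $H$ as the looped digraph on the colors with $A(H)=\{(c,c):c\in V(H)\}$, verify the hypothesis of Theorem \ref{classlema}, and conclude. The only difference is that you spell out the walk-to-path reduction that the paper leaves implicit in the phrase ``which is a kernel by monochromatic paths,'' and your argument for that step is sound.
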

\begin{proof}
Consider the digraph $H$ whose vertices are the colors represented in $A(D)$, and $A(H)=\{ (c,c) : c \in V(H) \}$. Since $D$ is an $H$-colored digraph satisfying the hypothesis on Lemma \ref{classlema}, it follows that $D$ has a kernel by $H$-walks, which is a kernel by monochromatic paths in $D$.
\end{proof}

\begin{cor}
If $D$ is an $m$-colored digraph such that for every vertex $x \in V(D)$, there exists an alternating $xw$-walk, for some vertex $w$ such that $A^{-}(w)$ and $A^{+}(w)$ have not colors in common, then $D$ has a kernel by properly colored walks.
\end{cor}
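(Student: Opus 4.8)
The plan is to derive this as a corollary of Theorem \ref{classlema}, exactly as the preceding corollary was, but with a suitably chosen pattern digraph $H$. Concretely, I would let $H$ be the digraph whose vertex set is the set of colors appearing on the arcs of $D$, and whose arc set is $A(H)=\{(c_{1},c_{2}) : c_{1},c_{2}\in V(H),\ c_{1}\neq c_{2}\}$, i.e. the complete symmetric digraph on the colors \emph{without} loops. Then $D$ is an $H$-colored digraph, and the whole proof is just a matter of translating the hypothesis and conclusion through this choice of $H$.

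First I would record the two relevant translations. For a walk $W=(x_{0},\ldots,x_{n})$ in $D$, there is an obstruction on $x_{i}$ exactly when $(\rho(x_{i-1},x_{i}),\rho(x_{i},x_{i+1}))\notin A(H)$, which for this $H$ happens if and only if $\rho(x_{i-1},x_{i})=\rho(x_{i},x_{i+1})$; hence $W$ is an $H$-walk if and only if consecutive arcs of $W$ have different colors, that is, if and only if $W$ is properly colored. Similarly, a vertex $w\in V(D)$ is obstruction-free in $D$ if and only if $\rho(a)\neq\rho(b)$ for every $a\in A^{-}(w)$ and $b\in A^{+}(w)$, i.e. if and only if $A^{-}(w)$ and $A^{+}(w)$ have no colors in common. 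With these two observations, the hypothesis of the corollary is literally the hypothesis of Theorem \ref{classlema} for this $H$: every $x\in V(D)$ admits an $xw$-$H$-walk to some obstruction-free vertex $w$.

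The conclusion then follows: Theorem \ref{classlema} yields a kernel by $H$-walks in $D$, and by the first translation such a set is precisely a kernel by properly colored walks in $D$. I do not expect a genuine obstacle here; the only points needing a moment's attention are (i) taking $H$ loopless, so that ``$H$-walk'' means ``properly colored'' and not ``without two equal consecutive colors reversed'' — contrast this with the previous corollary, where $H$ consists solely of loops — and (ii) checking the degenerate cases, where an isolated or already obstruction-free vertex $x$ is accommodated by the trivial walk $(x)$, so that the hypothesis is not vacuous.
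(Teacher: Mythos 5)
Your proposal is correct and follows essentially the same route as the paper: take $H$ to be the complete symmetric loopless digraph on the colors of $A(D)$, observe that $H$-walks are exactly the properly colored (alternating) walks and that obstruction-free vertices are exactly those $w$ with $A^{-}(w)$ and $A^{+}(w)$ sharing no color, and then apply Theorem \ref{classlema}. The paper's proof is just a terser version of the same argument; your explicit translations fill in the details it leaves implicit.
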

\begin{proof}
Consider the digraph $H$ whose vertices are the colors represented in $A(D)$, and $A(H)=\{ (c,d) : \{c, d \} \subseteq V(H), c \neq d \}$.  Since $D$ is an $H$-colored digraph satisfying the hypothesis on Theorem \ref{classlema}, it follows that $D$ has a kernel by $H$-walks, which is a kernel by alternating walks in $D$.
\end{proof}

The goal of the following lemma is to show the importance of the notion of a walk-preservative $H$-class partition. Such condition allows us to find $(l+1,H)$-absorbent sets by walks in an $H$-colored digraph through $l$-absorbent sets in the $H$-class digraph. 

\begin{prop}
\label{c1.p2}
Let $D$ be an $H$-colored digraph with not isolated vertices, $\mathscr{F}$ a walk-preservative $H$-class partition of $A(D)$, and $\mathcal{S}$ an independent and $l$-absorbent set in $C_{\mathscr{F}}(D)$ for some $l \geq 1$. If $K$ is a kernel by paths in $D\langle \cup_{F\in \mathcal{S}}F \rangle$, then $K$ is an $(l+1, H)$-absorbent set by walks in $D$.
\end{prop}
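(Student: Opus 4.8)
The plan is to set $D':=D\langle\cup_{F\in\mathcal{S}}F\rangle$ and show directly that every $x\in V(D)\setminus K$ admits an $xK$-walk of $H$-length at most $l+1$. Two observations will be used throughout. First, $K\subseteq V(D')$, because $K$ is a kernel by paths in $D'$. Second, since $\mathcal{S}$ is independent in $C_{\mathscr{F}}(D)$, Lemma~\ref{c1.l1} gives that $D'$ is an $H$-subdigraph of $D$, so every path in $D'$ is an $H$-walk (and has $H$-length $1$ when it is nontrivial). I then split into the cases $x\in V(D')$ and $x\notin V(D')$.

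The case $x\in V(D')$ is immediate: $K$ is a kernel by paths in $D'$, so there is an $xK$-path $P$ in $D'$, and $l_{H}(P)=1\le l+1$. So assume $x\notin V(D')$; note that any class of $\mathscr{F}$ containing an arc incident with $x$ must lie outside $\mathcal{S}$, for otherwise $x\in V(D')$. I first claim $d^{+}(x)\neq 0$. Suppose not; since $D$ has no isolated vertices there is an arc $(u,x)\in A(D)$ lying in some class $F\in N^{-}_{\mathscr{F}}(x)$ with $F\notin\mathcal{S}$. As $\mathcal{S}$ is $l$-absorbent in $C_{\mathscr{F}}(D)$, there is an $F\mathcal{S}$-walk $(F_{0}=F,F_{1},\dots,F_{m})$ in $C_{\mathscr{F}}(D)$ with $F_{m}\in\mathcal{S}$ and $m\le l$. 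Applying the walk-preservative hypothesis to $(F_{0},F_{1})$ with $z=x$ yields an $xw$-path in $D\langle F_{0}\rangle$ with $w\in V(D\langle F_{1}\rangle)$; but $d^{+}(x)=0$, so this path is trivial and $x\in V(D\langle F_{1}\rangle)$. Iterating along the walk forces $x\in V(D\langle F_{m}\rangle)\subseteq V(D')$, a contradiction. Hence $x$ has an out-arc, whose class $F_{0}$ satisfies $F_{0}\notin\mathcal{S}$.

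Now, since $\mathcal{S}$ is $l$-absorbent and $F_{0}\notin\mathcal{S}$, fix an $F_{0}\mathcal{S}$-walk $(F_{0},F_{1},\dots,F_{m})$ in $C_{\mathscr{F}}(D)$ with $F_{m}\in\mathcal{S}$ and $1\le m\le l$. Setting $w_{0}:=x\in V(D\langle F_{0}\rangle)$ and applying the walk-preservative hypothesis in turn to $(F_{0},F_{1}),(F_{1},F_{2}),\dots,(F_{m-1},F_{m})$, I obtain vertices $w_{1},\dots,w_{m}$ and $w_{i-1}w_{i}$-paths $Q_{i}$ in $D\langle F_{i-1}\rangle$ with $w_{i}\in V(D\langle F_{i}\rangle)$. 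Put $W:=Q_{1}\cup\cdots\cup Q_{m}$, an $xw_{m}$-walk; since $F_{m}\in\mathcal{S}$ we have $w_{m}\in V(D\langle F_{m}\rangle)\subseteq V(D')$, so $W$ is open. Each $Q_{i}$ has all of its arcs in the single class $F_{i-1}$, hence no obstruction; and by the defining property of an $H$-class partition an obstruction of $W$ occurs only at a vertex where two consecutive arcs lie in different classes. Reading the arcs of $W$ in order, they fall into the $m$ (possibly empty) blocks $A(Q_{1}),\dots,A(Q_{m})$, the $i$-th inside $F_{i-1}$, so there are at most $m-1$ such vertices; thus $|O_{H}(W)|\le m-1\le l-1$ and $l_{H}(W)=|O_{H}(W)|+1\le l$. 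If $w_{m}\in K$, then $W$ is the desired $xK$-walk. Otherwise $w_{m}\in V(D')\setminus K$, so there is a $w_{m}K$-path $P$ in $D'$, which is an $H$-walk by Lemma~\ref{c1.l1}; its endpoint $k\in K$ together with $x$ and $w_{m}$ forms three distinct vertices (indeed $x,w_{m}\notin K$ while $k\in K$, and $x\neq w_{m}$ since $x\notin V(D')\ni w_{m}$), so Lemma~\ref{obs1}(d) gives $l_{H}(W\cup P)\le l_{H}(W)+1\le l+1$, and $W\cup P$ is an $xK$-walk. Hence $K$ is $(l+1,H)$-absorbent by walks in $D$.

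The step I expect to be the main obstacle is ruling out that $x$ is a sink when $x\notin V(D')$: this is exactly where the walk-preservativity of $\mathscr{F}$ and the $l$-absorbency of $\mathcal{S}$ in $C_{\mathscr{F}}(D)$ must be combined, and it is genuinely needed, since a sink lying outside $D'$ would admit no walk at all into $K$. The remainder is bookkeeping: concatenating paths each confined to a single class produces at most one obstruction at each of the at most $m-1$ class-changes, and gluing on the absorbing path $P$ costs at most one further obstruction by Lemma~\ref{obs1}(d).
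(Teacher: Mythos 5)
Your proof is correct and follows essentially the same route as the paper's: lift an absorbing walk of $C_{\mathscr{F}}(D)$ to a concatenation of paths, one per class, via walk-preservativity, bound the obstructions by the number of class changes, and append an absorbing path of $D\langle\cup_{F\in\mathcal{S}}F\rangle$ at the cost of at most one more obstruction via Lemma~\ref{obs1}(d). The only difference is that you explicitly rule out the degenerate case $d^{+}(x)=0$ (which the paper handles only implicitly) and you work with an absorbing walk rather than a minimum-length path in $C_{\mathscr{F}}(D)$; neither affects the argument.
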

\begin{proof}
Let $D'=D\langle \cup_{F\in \mathcal{S}}F \rangle$ and $x_{0} \in V(D) \setminus K$. If $x_{0} \in V(D')$, since $K$ is a kernel by paths in $D'$, there exists an $x_{0}K$-path in $D'$, say $T$. It follows from Lemma \ref{c1.l1} that $T$ is an $H$-path. Hence, $l_{H}(T)=1$, which implies that $l_{H}(T)\leq l+1$. 

Now we will assume that $x_{0} \notin V(D')$. By hypothesis, $x_{0}$ is not isolated, which implies that $x_{0} \in V(D\langle F_{0} \rangle)$ for some $F_{0} \in \mathscr{F}$. Notice that $F_{0} \notin \mathcal{S}$ because $x_{0} \notin V(D')$. Since $\mathcal{S}$ is an $l$-absorbent set in $C_{\mathscr{F}}(D)$, we can consider an $F_{0}F_{t}$-path with minimum length in $C_{\mathscr{F}}(D)$, say $T=(F_{0}, \ldots , F_{r})$, where $F_{r} \in \mathcal{S}$. Notice that $r \leq l$.

Since $\mathscr{F}$ is walk-preservative, there exists $x_{\alpha_{1}} \in V(D\langle F_{0} \rangle ) \cap V(D\langle F_{1} \rangle)$ such that there exists an $x_{0}x_{\alpha_{1}}$-path in $D\langle F_{0} \rangle$, say $T_{0}$,
and, for every $i \in \{2, \ldots , r-1 \}$, there exist $x_{\alpha_{i}} \in V(D\langle F_{i-1} \rangle ) \cap V(D\langle F_{i} \rangle)$ and $x_{\alpha_{i+1}} \in V(D\langle F_{i} \rangle)\cap V(D\langle F_{i+1} \rangle )$, such that there exists an $x_{\alpha_{i}}x_{\alpha_{i+1}}$-path in $D\langle F_{i} \rangle$, say $T_{i}$. 
Notice that $T_{i}$ is an $H$-path for every $i \in \{ 0, \ldots , r-1 \}$, and $x_{\alpha _{r}} \in V(D')$. Moreover, since $C$ is a path in $C_{\mathscr{F}}(D)$, then $A(T_{i}) \cap A(T_{j}) = \emptyset$ whenever $i \neq j$.

Now we consider  $C=\cup_{i=0}^{r-1} T_{i}$, and suppose that $C=(z_{1}, \ldots , z_{n})$. Notice that $z_{n} \in V(D ' )$ (because $z_{n}=x_{\alpha_{r}}$). 

\begin{description}
\item \textbf{Claim 1.} $l_{H}(C) \leq l+1$.

For every $i \in \{ 0, \ldots , r - 2 \}$, let $U_{i}=\{ l \in \{ 1, \ldots , n-1 \} : (z_{l-1}, z_{l}) \in F_{i} \}$ be, and $L = \{ i \in \{ 0, \ldots , r-2 \} : U_{i} \neq \emptyset \}$. For every $i \in L$, define $\beta_{i} = max U_{i}$. 
 
We will show that  $O_{H}(C) \subseteq \{ \beta_{i} : i \in L \}$.  If $m \in O_{H}(C)$, then it follows that $(\rho (z_{m-1}, z_{m}), \rho (z_{m}, z_{m+1}) ) \notin A(H)$. On the other hand, we have that $(z_{m-1}, z_{m} ) \in A(T_{j})$ for some $j \in \{ 0, \ldots , r-1 \}$.  Since $T_{j}$ is an $H$-path, then $(z_{m}, z_{m+1}) \notin A(T_{j})$, which implies that $m = max U_{j}$ and $j \leq r-2$. Hence, $m \in \{\beta_{i} : i \in L \}$. It follows that $O_{H}(C) \subseteq L$. Hence, $|O_{H}(C)| \leq |L|$, that is $|O_{H}(C)| \leq r-1$, and we can conclude that $l_{H}(C) \leq l$.
\end{description} 
 
If $z_{n} \in K$, then by Claim 1 we have that $C$ is an $x_{0}K$-path with $l_{H}(C) \leq l +1$. If $z_{t} \notin K$, since $z_{n} \in V(D')$, and $K$ is an absorbent set by paths in $D'$, then there exists a $z_{n}K$-path in $D'$, say $T_{r}$. Hence, $C' = C \cup T_{r}$ is an $x_{0}K-$walk in $D$, and by Lemma \ref{obs1}, $l_{H}(C') \leq l_{H}(C) + 1$, which implies that $l_{H}(C') \leq l + 1$. 

Therefore, $K$ is an $(l+1, H)$-absorbent set by walks in $D$.
\end{proof}

Notice that conclusion of Proposition \ref{c1.p2} is tight. We will show an example where $K$ is not necessarily an $(r,H)$-absorbent set by walks in $D$ for some $r\leq l+1$. Consider the $H$-colored digraph shown in Figure \ref{figure1}, and for every $i \in \{1, \ldots , 6 \}$ let $F_{i}=\{ e \in A(D): \rho (e)= c_{i} \}$. Clearly, $\mathscr{F}=\{ F_{i} : i \in \{ 1, \ldots , 6 \}\}$ is a walk-preservative $H$-class partition of $A(D)$. Notice that $\mathcal{S}=\{F_{6}\}$ is a $3$-absorbent set in $C_{\mathscr{F}}(D)$. On the other hand, it is straightforward to see that $K=\{ x_{4} \}$ is a kernel by paths in $D\langle F_{6} \rangle$ which is not an $(r,H)$-absorbent by walks in $D$ for every $r \in \{ 1,2,3 \}$.

\begin{figure}[ht]
\centering
\includegraphics[scale=0.5]{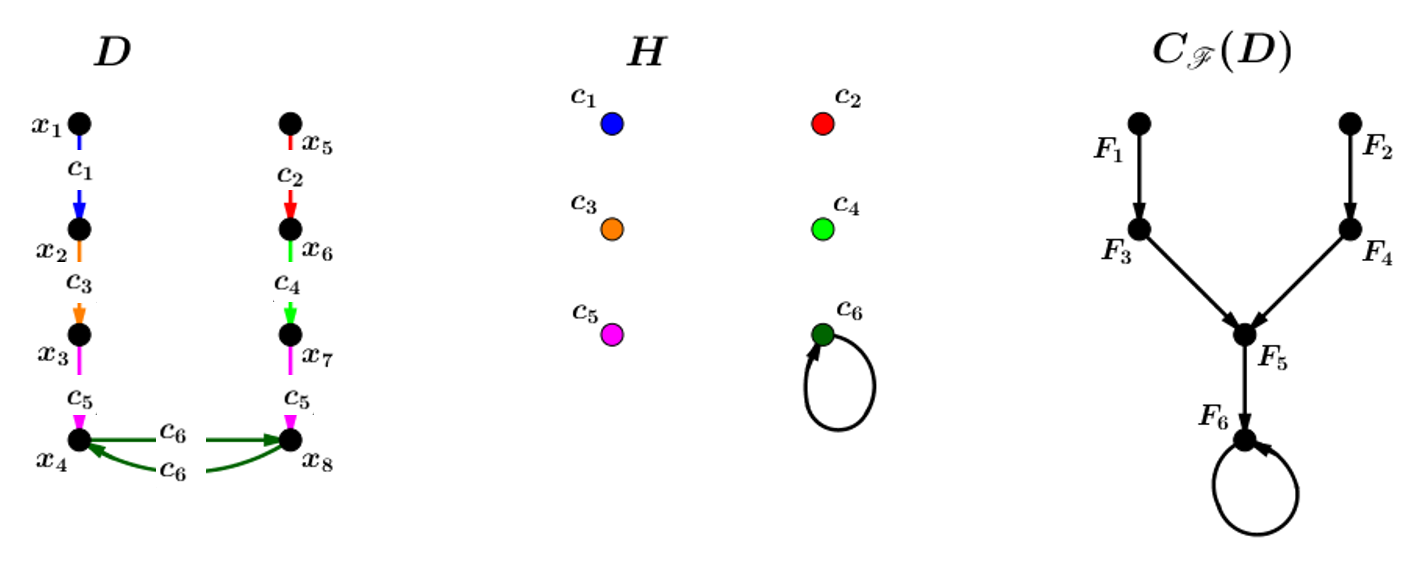}
\caption{\label{figure1}}
\end{figure}

On the other hand, if $D$ is an $H$-colored digraph and $\mathscr{F}$ is an $H$-class partition which is not walk-preservative, then Proposition \ref{c1.p2} is not necessarily true. Consider the $H$-colored digraph shown in Figure \ref{fig2}. For every $i \in \{1, \ldots , 5 \}$ let $F_{i}=\{ e \in A(D): \rho (e)=c_{i} \}$. It is straightforward to see that $\mathscr{F}=\{ F_{i} : i \in \{ 1, \ldots , 5\}\}$ is an $H$-class partition of $A(D)$. Notice that $(F_{2}, F_{3})$ is an arc of $C_{\mathscr{F}}(D)$ such that $x_{3} \in V(D\langle F_{2} \rangle)$, and there is no $x_{3}w$-walk in $D\langle F_{2} \rangle$ with $w \in V(D \langle F_{3} \rangle )$, that is, $\mathscr{F}$ is not a walk-preservative $H$-class partition of $A(D)$. On the other hand, $\mathcal{S}=\{F_{5} \}$ is a $4$-absorbent set in $C_{\mathscr{F}}(D)$ but no kernel by paths in $D\langle F_{5} \rangle$ is a $(5, H)$-absorbent set by walks in $D$.

\begin{figure}[ht]
\centering
\includegraphics[scale=0.5]{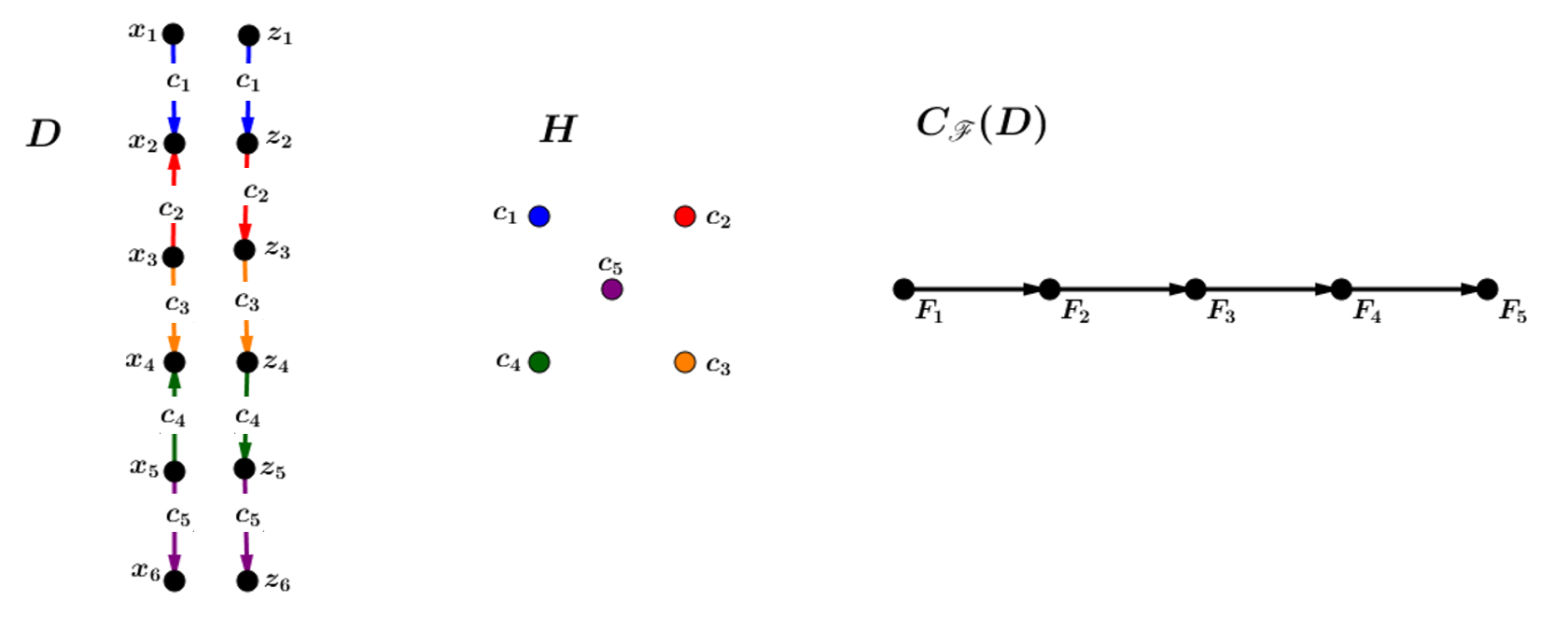}
\caption{\label{fig2}}
\end{figure}

Given an $H$-colored digraph $D$, a walk-preservative $H$-class partition of $A(D)$, say $\mathscr{F}$, and an $l$-absorbent an independent set in $C_{\mathscr{F}}(D)$,  it follows from Proposition \ref{c1.p2} that every kernel by paths in $D\langle \cup _{F \in N } F \rangle$, say $K$, is an $(l+1,H)$-absorbent set by walks in $D$. Remark that find kernels by paths in arbitrary digraphs can be solved in polynomial time.

\begin{lemma}
\label{c1.l0}
Let $D$ be an $H$-colored digraph and $\mathscr{F}$ an $H$-class partition of $A(D)$. If $\mathcal{S}$ is a nonempty subset of $\mathscr{F}$ and $K$ is a kernel by paths in $D\langle \cup _{F \in \mathcal{S}} F \rangle$, then  for every $x \in K$, $N^{-}_{\mathscr{F}}(x) \cap \mathcal{S} \neq \emptyset$.
\end{lemma}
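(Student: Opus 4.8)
The plan is to reduce the statement directly to Lemma \ref{c0.l1}. Set $D' = D\langle \cup_{F \in \mathcal{S}} F \rangle$. The first step is to check that $D'$ is a digraph with no isolated vertices: by the definition of the arc-induced subdigraph recalled in Section 2, $V(D')$ consists exactly of the vertices of $D$ incident with at least one arc of $\cup_{F\in\mathcal{S}}F = A(D')$, so no vertex of $D'$ is isolated. Moreover, since $\mathscr{F}$ is a partition of $A(D)$ its members are nonempty, so $\mathcal{S}\neq\emptyset$ forces $A(D')\neq\emptyset$; thus $D'$ has at least one arc.

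The second step is to apply Lemma \ref{c0.l1} to $D'$: since $K$ is a kernel by paths in $D'$ and $D'$ has no isolated vertices, we get $d^{-}_{D'}(x)\neq 0$ for every $x\in K$. Fixing $x\in K$, this means there is an arc $(u,x)\in A(D') = \cup_{F\in\mathcal{S}}F$, hence $(u,x)\in F$ for some $F\in\mathcal{S}$. By the definition of $N^{-}_{\mathscr{F}}(x)$ this class $F$ belongs to $N^{-}_{\mathscr{F}}(x)$, and $F\in\mathcal{S}$ by construction, so $F\in N^{-}_{\mathscr{F}}(x)\cap\mathcal{S}$ and the intersection is nonempty, as desired.

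I do not anticipate a genuine obstacle here; the argument is essentially a bookkeeping translation between ``$x$ has an in-arc in $D'$'' and ``$x$ has an in-class in $\mathcal{S}$''. The only point requiring a moment's care is confirming that $D'$ has no isolated vertices so that Lemma \ref{c0.l1} is applicable, and that is immediate from the definition of $D\langle\cdot\rangle$. In particular, the full $H$-class-partition structure is not needed beyond the fact that each class is a nonempty set of arcs.
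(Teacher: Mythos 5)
Your proof is correct and follows essentially the same route as the paper: apply Lemma \ref{c0.l1} to $D'=D\langle \cup_{F\in\mathcal{S}}F\rangle$ to get an in-arc at $x$, then read off the class of that arc. The only difference is that you explicitly verify the no-isolated-vertices hypothesis of Lemma \ref{c0.l1}, which the paper leaves implicit; that is a reasonable bit of extra care, not a divergence.
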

\begin{proof}
Let $K$ be a kernel by paths in $D'=D\langle \cup _{F \in \mathcal{S}} F \rangle$ and $x \in K$. By Lemma \ref{c0.l1} we have that $d^{-}_{D'}(x) \neq 0$. Hence, there exists $z \in V(D')$ such that $(z,x) \in A(D')$. It follows from the definition of $D'$ that $(z,x) \in F$ for some $F \in \mathcal{S}$, which implies that $F\in N^{-}_{\mathscr{F}}(x) \cap \mathcal{S}$. Hence, $N^{-}_{\mathscr{F}}(x) \cap \mathcal{S} \neq \emptyset$.
\end{proof}

\begin{prop}
\label{c1.p1bis}
Let $D$ be an $H$-colored digraph and $\mathscr{F}$ a non-trivial walk-preservative $H$-class partition of $A(D)$ such that $C_{\mathscr{F}}(D)$ has no sinks. If $\mathcal{S}$ is an independent set in $C_{\mathscr{F}}(D)$, then there exists a kernel by paths in $D\langle \cup_{F \in \mathcal{S}} F\rangle$, say $N$, such that $N \subseteq V(D\langle \cup_{G \in  N^{+}(\mathcal{S})} G \rangle )$.
\end{prop}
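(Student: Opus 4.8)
The plan is to construct $N$ via the classical recipe for a kernel by paths — choose one arbitrary vertex in each terminal strong component of $D' := D\langle \cup_{F \in \mathcal{S}} F \rangle$ — but to make the choices carefully, so that the vertex picked in each terminal strong component moreover lies in $V(D\langle G \rangle)$ for some $G \in N^{+}(\mathcal{S})$. First I would record two easy facts. On the one hand, $D'$ has no isolated vertices, since every vertex of $D'$ is incident with some arc of some $F \in \mathcal{S}$. On the other hand, because $\mathcal{S}$ is independent in $C_{\mathscr{F}}(D)$, for every $F \in \mathcal{S}$ any out-neighbour $G \neq F$ of $F$ in $C_{\mathscr{F}}(D)$ satisfies $G \notin \mathcal{S}$, that is $G \in N^{+}(\mathcal{S})$: an arc $(F,G)$ joining two distinct vertices of $\mathcal{S}$ would contradict independence.

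Next, fix a terminal strong component $C$ of $D'$ and any vertex $z_{0} \in C$. Since $z_{0} \in V(D')$ there is $F \in \mathcal{S}$ with $z_{0} \in V(D\langle F \rangle)$; since $C_{\mathscr{F}}(D)$ has no sinks there is $G \neq F$ with $(F,G) \in A(C_{\mathscr{F}}(D))$, and by the previous paragraph $G \in N^{+}(\mathcal{S})$. Now apply the fact that $\mathscr{F}$ is walk-preservative to the arc $(F,G)$ and the vertex $z_{0}$: there is a $z_{0}w$-path in $D\langle F \rangle$ for some $w \in V(D\langle G \rangle)$. Because $D\langle F \rangle$ is a subdigraph of $D'$ and $C$ is a terminal strong component, every vertex reachable from $z_{0}$ inside $D'$ lies in $C$; hence $w \in C \cap V(D\langle G \rangle)$. (When $C$ is trivial, $z_{0}$ is a sink of $D'$ and hence of $D\langle F \rangle$, so the path has length $0$, $w = z_{0}$, and the conclusion $z_{0} \in V(D\langle G \rangle)$ still holds; so this case is just a degenerate instance of the same argument.)

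Finally I would let $N$ consist of exactly one such vertex $w_{C}$ for each terminal strong component $C$ of $D'$. Since $N$ contains precisely one vertex per terminal strong component of $D'$, it is a kernel by paths in $D'$ by the classical construction recalled in Section~2; and each $w_{C}$ lies in $V(D\langle G \rangle)$ for some $G \in N^{+}(\mathcal{S})$, so $N \subseteq V(D\langle \cup_{G \in N^{+}(\mathcal{S})} G \rangle)$, as required. The only delicate point is the one handled in the middle paragraph: one must make sure the chosen ``good'' vertex stays inside the terminal strong component, and this is exactly where walk-preservativity and terminality are used together — walk-preservativity produces a forward path from $z_{0}$ that never leaves $D\langle F\rangle \subseteq D'$, and a terminal strong component absorbs all of its forward paths.
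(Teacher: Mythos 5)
Your proof is correct, but it takes a genuinely different route from the paper's. You build the kernel by paths directly via the classical recipe (one vertex per terminal strong component of $D_{1}=D\langle \cup_{F\in\mathcal{S}}F\rangle$), and the whole point of your argument is that the representative of each terminal component can be \emph{chosen} inside $V(D\langle \cup_{G\in N^{+}(\mathcal{S})}G\rangle)$: the no-sinks hypothesis plus independence of $\mathcal{S}$ yields an arc $(F,G)$ with $G\in N^{+}(\mathcal{S})$, walk-preservativity yields a forward path inside $D\langle F\rangle\subseteq D_{1}$ to a vertex of $V(D\langle G\rangle)$, and terminality traps that path inside the component. The paper instead runs an extremal exchange argument: it takes a kernel by paths $N$ minimizing $|N\setminus V(D_{2})|$ (with $D_{2}=D\langle\cup_{G\in N^{+}(\mathcal{S})}G\rangle$), and if some $x_{0}\in N\setminus V(D_{2})$ existed it uses the same walk-preservativity path to a vertex $z\in V(D_{2})$ and verifies that $(N\setminus\{x_{0}\})\cup\{z\}$ is again a kernel by paths, contradicting minimality. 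The two proofs use identical ingredients; yours is more directly constructive and shorter because it delegates the kernel-by-paths verification to the terminal-component construction already recalled in Section~2, while the paper's exchange argument has the side benefit (exploited in the remark following the proposition) of showing how to \emph{repair} an arbitrary kernel by paths step by step until it lands in $V(D_{2})$. Your handling of the degenerate case of a trivial terminal component (where the walk-preservativity path must have length $0$, forcing $z_{0}\in V(D\langle G\rangle)$ itself) is the one point that needed explicit care, and you address it correctly.
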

\begin{proof}
We will denote by $D_{1}$ the digraph $D\langle \cup_{F \in \mathcal{S}} F\rangle$. Since $C_{\mathscr{F}}(D)$ has no sinks, then $N^{+}(\mathcal{S})\neq \emptyset$. Hence, denote by $D_{2}$ the digraph $D\langle \cup_{G \in  N^{+}(\mathcal{S})} G \rangle$. Let $N$ be a kernel by paths in $D_{1}$ intersecting $V(D_{2})$ the most possible, that is, for every kernel by paths in $D_{1}$, say $N'$, we have that
	\begin{equation}
	\label{rb.l3.ec1}
	|N \setminus V(D_{2})| \leq |N' \setminus V(D_{2})|
	\end{equation}
Notice that possibly $N \cap V(D_{2})=\emptyset$. We claim that $N \setminus V(D_{2})=\emptyset$. Proceeding by contradiction, suppose that there exists $x_{0} \in N \setminus V(D_{2})$.  Since $x_{0} \in V(D_{1})$, it follows from the definition of $D_{1}$ that there exists $F \in \mathcal{S}$ such that $x_{0} \in V(D \langle F \rangle)$. By hypothesis, $C_{\mathscr{F}}(D)$ has no sinks, which implies that there exists $G \in V(C_{\mathscr{F}}(D))$ such that $F \neq G$ and $(F, G) \in A(C_{\mathscr{F}}(D))$. It follows from the fact that $\mathscr{F}$ is a walk-preservative $H$-class partition that there exists an $x_{0}z$-path in $D\langle F \rangle$, say $C$, for some $z \in V(D \langle G \rangle)$. Notice that $C$ is a path in $D_{1}$, $z \in V(D_{1}) \cap V(D_{2})$, and $z \neq x_{0}$. Moreover, since $N$ is independent by paths in $D_{1}$, then $z \notin N$. We will prove the following claims in order to get a contradiction.
\begin{description}
	\item \textbf{Claim 1.} There exists a $zx_{0}$-path in $D_{1}$.

Since $z \notin N$, there exists a $zy$-path in $D_{1}$ for some $y \in N$, say $P'$. It follows that $C \cup P'$ is an $x_{0}y$-walk in $D_{1}$ such that $x_{0} \in N$ and $y\in N$, which implies that $x_{0}=y$, concluding that $P'$ is a $zx_{0}$-path in $D_{1}$.

	\item \textbf{Claim 2.} $(N \setminus \{ x_{0}\}) \cup \{ z \}$ is a kernel by paths in $D_{1}$. 

In order to show that $N'=(N \setminus \{ x_{0}\}) \cup \{ z \}$ is an absorbent set by paths in $D_{1}$, consider $u \in V(D_{1}) \setminus N'$. 
If $u=x_{0}$, then $C$ is an $x_{0}N'$-path in $D_{1}$. 
If $u \neq x_{0}$, then $u \in V(D_{1}) \setminus N$. It follows from the fact that $N$ is an absorbent set by paths in $D_{1}$ that there exists a $uv$-path in $D_{1}$, say $P$, for some $v \in N$. If $v \neq x_{0}$, then $P$ is a $uN'$-path in $D_{1}$. If $v=x_{0}$, then $P \cup C$ is a $uN'$-walk in $D_{1}$, concluding that $N'$ is an absorbent set by paths in $D_{1}$.

Now we will show that $N'$ is an independent set by paths in $D_{1}$. Proceeding by contradiction, suppose that there exists a $uv$-path in $D_{1}$, say $T$, where $\{ u, v \} \subseteq N'$. 
Since $N$ is an independent set by paths in $D_{1}$, then $z \in \{ u, v \}$. 
If $z=u$, it follows that  $C \cup T$ is an $x_{0}v$-walk in $D_{1}$, which contradicts the independence by paths of $N$. If $v=z$, by Claim 1 there exists a $zx_{0}$-path in $D_{1}$, say $P$, which implies that $T \cup P$ is a $ux_{0}$-walk in $D_{1}$, contradicting the independence by paths of $N$. Therefore, $N'$ is an independent set by paths in $D_{1}$, and the claim holds.
\end{description}

Notice that $| N' \setminus V(D_{2})| = | N \setminus V(D_{2})|-1$, which is not possible by (1). Therefore, $N \setminus V(D_{2})=\emptyset$, concluding that $N \subseteq  V(D_{2})$. 
\end{proof}

Notice that the previous proof gives us a simple way to find the kernel by paths described in Proposition \ref{c1.p1bis}. First, let $D_{2} = D\langle \cup_{G \in  N^{+}(\mathcal{S})} G \rangle$, and take an arbitrary kernel by walks in $D_{1}= D\langle \cup_{F \in \mathcal{S}} F\rangle$, say $N$. If $N \subseteq V(D_{2})$, we are done. Otherwise, there exist $x_{0} \in N \setminus V(D_{2})$, $z \in V(D_{1}) \cap V(D_{2})$ and a $x_{0}z$-path in $D_{1}$. Then, replace $N$ by $(N \setminus \{ x_{0} \}) \cup \{z\}$. Repeat such procedure until $N \subseteq V(D_{2})$.

\section{Main results} 

In this section we will show some conditions that guarantee the existence of $(k,l,H)$-kernels by walks in $H$-colored digraphs by means of $(k,l)$-kernels in the $H$-class digraph.

\begin{prop}
\label{propgird2}
Let $D$ be an $H$-colored digraph, and $\mathscr{F}$ a walk-preservative $H$-class partition of $A(D)$ such that $C_{\mathscr{F}}(D)$ has a $(k,l)$-kernel, say $\mathcal{S}$. If the following conditions hold:
	\begin{enumerate}[a)]
	\item $C_{\mathscr{F}}(D)$ has no sinks, and every cycle in $C_{\mathscr{F}}(D)$ is either a loop or has length at least $k$.
	
	\item For every $x \in V(D)$ such that $N_{\mathscr{F}}(x) \cap S \neq \emptyset $ and $N_{\mathscr{F}}(x) \cap N^{+}(\mathcal{S}) \neq \emptyset$, we have that $N^{-}_{\mathscr{F}}(x) \subseteq \mathcal{S}$.
	\end{enumerate}
Then $D$ has a $(k,l+1,H)$-kernel by walks.
\end{prop}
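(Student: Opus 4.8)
The plan is to transfer a carefully chosen kernel by paths from the subdigraph ``generated by $\mathcal{S}$'' back to $D$. First I would clear away degeneracies. If $A(D)=\emptyset$, then $V(D)$ is vacuously a $(k,l+1,H)$-kernel by walks. Otherwise, since deleting isolated vertices affects neither $A(D)$, nor $\mathscr{F}$, nor $C_{\mathscr{F}}(D)$, nor $\mathcal{S}$, Lemma~\ref{noisolatedvertices} lets me assume $D$ has no isolated vertices; note also that hypothesis~(a) forces $\mathscr{F}$ to be non-trivial (a single class would make the unique vertex of $C_{\mathscr{F}}(D)$ a sink) and $\mathcal{S}\neq\emptyset$ (an empty set cannot be $l$-absorbent once $C_{\mathscr{F}}(D)$ has vertices). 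Since $\mathcal{S}$ is $k$-independent with $k\geq 2$ it is independent in $C_{\mathscr{F}}(D)$, so Proposition~\ref{c1.p1bis} applies and yields a kernel by paths $N$ of $D_{1}:=D\langle \cup_{F\in\mathcal{S}}F\rangle$ with the extra property $N\subseteq V(D_{2})$, where $D_{2}:=D\langle \cup_{G\in N^{+}(\mathcal{S})}G\rangle$. I claim $N$ is the desired $(k,l+1,H)$-kernel by walks.

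Absorbency is quick: $\mathcal{S}$ is independent and $l$-absorbent in $C_{\mathscr{F}}(D)$, $\mathscr{F}$ is walk-preservative, and $N$ is a kernel by paths in $D_{1}$, so Proposition~\ref{c1.p2} gives that $N$ is $(l+1,H)$-absorbent by walks in $D$. For independence, fix distinct $x,z\in N$ and an $xz$-walk $W=(w_{0}=x,\dots,w_{n}=z)$; I must show $|O_{H}(W)|\geq k-1$. Write $O_{H}(W)=\{\alpha_{1}<\dots<\alpha_{r}\}$ and put $\alpha_{0}=0$, $\alpha_{r+1}=n$. Each subwalk $W_{i}$ of $W$ from $w_{\alpha_{i}}$ to $w_{\alpha_{i+1}}$ has no internal obstruction, hence is an $H$-walk of length $\geq 1$; by Lemma~\ref{obs1}(c) there is a unique $G_{i}\in\mathscr{F}$ with $A(W_{i})\subseteq G_{i}$, and since the last arc of $W_{i}$ and the first arc of $W_{i+1}$ share the vertex $w_{\alpha_{i+1}}$ where there is an obstruction, we get $G_{i}\neq G_{i+1}$ and $(G_{i},G_{i+1})\in A(C_{\mathscr{F}}(D))$. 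Thus $(G_{0},\dots,G_{r})$ is a walk in $C_{\mathscr{F}}(D)$ with no two consecutive vertices equal, $x\in V(D\langle G_{0}\rangle)$, $z\in V(D\langle G_{r}\rangle)$, the first arc of $W$ lies in $G_{0}$ and the last in $G_{r}$. Since $z\in N\subseteq V(D_{1})\cap V(D_{2})$, the vertex $z$ meets a class of $\mathcal{S}$ and a class of $N^{+}(\mathcal{S})$, so hypothesis~(b) gives $N^{-}_{\mathscr{F}}(z)\subseteq\mathcal{S}$; as the last arc of $W$ enters $z$, we get $G_{r}\in\mathcal{S}$.

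If $r=0$ then $G_{0}=G_{r}\in\mathcal{S}$ and $W$ is an $H$-walk inside $D\langle G_{0}\rangle\subseteq D_{1}$, so $D_{1}$ contains an $xz$-path, contradicting independence by paths of $N$; hence $r\geq 1$. By Lemma~\ref{c1.l0} there is $F^{-}\in N^{-}_{\mathscr{F}}(x)\cap\mathcal{S}$, and since $d^{-}(x)\neq 0$ and $d^{+}(x)\neq 0$ (the latter because $n\geq 1$), Lemma~\ref{obs1}(a) gives $(F^{-},G_{0})\in A(C_{\mathscr{F}}(D))$. Let $Q$ be the walk $(G_{0},\dots,G_{r})$ if $G_{0}\in\mathcal{S}$ and $(F^{-},G_{0},\dots,G_{r})$ otherwise; in both cases $Q$ is a walk in $C_{\mathscr{F}}(D)$ with no two consecutive vertices equal, both endpoints in $\mathcal{S}$, and $l(Q)\in\{r,r+1\}$. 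If the endpoints of $Q$ are distinct, the $k$-independence of $\mathcal{S}$ forces $l(Q)\geq k$, hence $r\geq k-1$. If they coincide, $Q$ is a closed walk with no loop-step, so it decomposes into directed cycles of $C_{\mathscr{F}}(D)$, each of length $\geq 2$ and therefore, by hypothesis~(a), of length $\geq k$; again $l(Q)\geq k$ and $r\geq k-1$. Either way $|O_{H}(W)|=r\geq k-1$, so $N$ is $(k,H)$-independent by walks, and reinstating the deleted isolated vertices via Lemma~\ref{noisolatedvertices} completes the proof.

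The step I expect to be the main obstacle is the independence argument, and inside it the interplay of the two hypotheses: hypothesis~(b) is exactly what is needed to pull $G_{r}$ into $\mathcal{S}$, and this is precisely why one must use the \emph{refined} kernel by paths of Proposition~\ref{c1.p1bis} with $N\subseteq V(D_{2})$ rather than an arbitrary one, while $F^{-}$ (via Lemmas~\ref{c1.l0} and \ref{obs1}(a)) supplies the $\mathcal{S}$-anchor at the other end; hypothesis~(a) is what rescues the case where $Q$ closes up, a situation about which $k$-independence says nothing. The small case $r=0$ must be disposed of separately, using only independence by paths of $N$. The remaining bookkeeping — verifying the obstruction-decomposition produces a consecutive-distinct walk in $C_{\mathscr{F}}(D)$ and assembling the pieces — is routine use of Section~3's lemmas.
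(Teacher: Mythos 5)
Your proof is correct and follows essentially the same route as the paper's: it obtains the refined kernel by paths $N\subseteq V(D_{2})$ from Proposition~\ref{c1.p1bis}, gets absorbency from Proposition~\ref{c1.p2}, and proves $(k,H)$-independence by projecting a walk onto $C_{\mathscr{F}}(D)$ via its obstructions, anchoring the terminal class in $\mathcal{S}$ through hypothesis~(b) and the initial one through Lemma~\ref{c1.l0}, with hypothesis~(a) handling the closed-walk case. The only differences are cosmetic (you apply hypothesis~(b) just to the terminal vertex and spell out the cycle-decomposition detail that the paper leaves implicit).
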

\begin{proof} 
First, suppose that $D$ has not isolated vertices. Let $D_{1}= D\langle \cup_{F\in \mathcal{S}} F \rangle$.
Since $C_{\mathscr{F}}(D)$ has no sinks and $\mathcal{S}$ is an independent set in $C_{\mathscr{F}}(D)$, then $N^{+}(\mathcal{S}) \neq \emptyset$. Now, let $D_{2}= D\langle \cup _{G\in N^{+}(\mathcal{S})}G \rangle$.  By Proposition \ref{c1.p1bis}, we can consider a kernel by paths in $D_{1}$, say $K$, such that $K \subseteq V(D_{2})$. We will show that $K$ is a $(k,l+1,H)$-kernel by walks in $D$.

Since $\mathscr{F}$ is walk-preservative, it follows from Proposition \ref{c1.p2} that $K$ is an $(l+1,H)$-absorbent set by walks in $D$. 
It only remains to show that $K$ is a $(k, H)$-independent set by walks in $D$. First, we will prove the following useful claim.

\begin{description}
\item \textbf{Claim 1.} For every $x \in K$, $N^{-}_{\mathscr{F}}(x) \subseteq A(D_{1})$.

If $x \in K$, then $x \in V(D_{1}) \cap V(D_{2})$. Since $x \in V(D_{1})$, it follows from the definition of $D_{1}$ that there exists $F \in \mathcal{S}$ such that $A(x) \cap F \neq \emptyset$. Hence, $N_{\mathscr{F}}(x) \cap  \mathcal{S} \neq \emptyset$. On the other hand, since $x \in V(D_{2})$, an analogous proof will show that $N_{\mathscr{F}}(x) \cap N^{+}(\mathcal{S} )\neq \emptyset$. 
By hypothesis (b), we can conclude that $N^{-}_{\mathscr{F}}(x) \subseteq \mathcal{S}$.
\end{description}

In order to show that $K$ is a $(k,H)-$independent set by walks in $D$, consider an $x_{0}x_{n}$-walk in $D$, say $T=(x_{0}, \ldots , x_{n})$, such that $\{ x_{0}, x_{n} \} \subseteq K$. 
 
	\begin{description}
	\item \textbf{Claim 2.} $O_{H}(T) \neq \emptyset$.
	
	 Proceeding by contradiction, suppose that $O_{H}(T)=\emptyset$. Hence, there exists $F' \in \mathscr{F}$ such that $A(T) \subseteq F'$, which implies that $(x_{n-1}, x_{n}) \in F'$. It follows from Claim 1 that $F' \in \mathcal{S}$. Hence, $T$ is an $x_{0}x_{n}$-walk in $D_{1}$, which contradicts the fact that $K$ is an independent set by paths in $D_{1}$. Therefore, $O_{H}(T)\neq \emptyset$ and the claim holds.	
	\end{description}

By Claim 2, suppose that $O_{H}(T)=\{ \alpha_{i} : i \in \{1, \ldots , t \} \}$ where $t \geq 1$, and $\alpha_{i} \leq \alpha_{i+1}$ for every $ i \in \{ 1, \ldots , t-1 \}$. On the other hand, for every $i \in \{ 1, \ldots , t \}$ consider $F_{i} \in \mathscr{F}$ such that $(x_{\alpha_{i}}, x_{\alpha_{i}}^{+} ) \in F_{i}$, and $F_{0} \in \mathscr{F}$ such that $(x_{0}, x_{1}) \in F_{0}$. It follows from the definition of $C_{\mathscr{F}}(D)$ that $T'=(F_{0}, F_{1}, \ldots , F_{t})$ is a walk in $C_{\mathscr{F}}(D)$. Notice that $l_{H}(T)=l(T')+1$ and, by Claim 1, $F_{t} \in \mathcal{S}$. Consider the following cases:

	\begin{description}
	\item	\textbf{Case 1.} $F_{0} \in \mathcal{S}$.

If $F_{0} \neq F_{t}$, as $\mathcal{S}$ is a $k$-independent set in $C_{\mathscr{F}}(D)$, then $l(T') \geq k$, which implies that $l_{H}(T) \geq k$.  
If $F_{0} =F_{t}$, then $T'$ is a closed walk in $C_{\mathscr{F}}(D)$ which is not a loop and, by hypothesis, $l(T')\geq k$. Hence, $l_{H}(T) \geq k$.
 
	\item \textbf{Case 2.} $F_{0} \in V(C_{\mathscr{F}}(D))\setminus \mathcal{S}$.

By Lemma \ref{c1.l0}, consider $F \in N^{-}_{\mathscr{F}}(x_{0}) \cap \mathcal{S}$. Since $F_{0} \notin \mathcal{S}$, then $F \neq F_{0}$. It follows from the definition of $C_{\mathscr{F}}(D)$ that $T'' =(F, F_{0} ) \cup T'$ is a walk in $C_{\mathscr{F}}(D)$. Notice that $l_{H}(T)=l(T'')$. If $F\neq F_{t}$, as  $\mathcal{S}$ is a $k$-independent set in $C_{\mathscr{F}}(D)$, we have that $l(T'') \geq k$, which implies that $l_{H}(T) \geq k$. If $F=F_{t}$, then  $T''$ is a closed walk in $C_{\mathscr{F}}(D)$ which is not a loop and, by hypothesis, $l(T'')\geq k$. Hence, $l_{H}(T) \geq k$.
	\end{description}

It follows from Case 1 and Case 2 that $K$ is a $(k,H)$-independent set by walks in $D$. Therefore, $K$ is a $(k,l+1)$-kernel by walks in $D$. 

Now, suppose that $W=\{ x \in V(D) : d(x)=0 \}$ is nonempty. By the previous proof, we have that $D - W$ has a $(k,l +1 ,H)$-kernel by walks, say $K$. By Lemma \ref{noisolatedvertices}, we can conclude that $K \cup W$ is a $(k, l+1,H)$-kernel by walks in $D$.
\end{proof}

Remark that, given an $H$-colored digraph that satisfies the hypothesis of proposition \ref{propgird2}, every kernel by walks $K$ in $D\langle \cup _{F \in \mathcal{S}} F \rangle $ contained in $D\langle \cup _{G\in N^{+}(\mathcal{S})}G \rangle$ is a $(k,l+1,H)$-kernel by walks in $D$. Such kind of kernel by walks can be found in polynomial time, as we show previously.

\begin{prop}
\label{emptyoutneighborhood}
Let $D$ be an $H$-colored digraph with not isolated vertices, $\mathscr{F}$ a walk-preservative $H$-class partition of $A(D)$, and $\mathcal{S}\subseteq V(C_{\mathscr{F}}(D))$ an independent and $l$-absorbent set in $C_{\mathscr{F}}(D)$ for some $l \geq 1$. If $N^{+}(\mathcal{S}) = \emptyset$ and $k \geq 2$, then every kernel by paths in $D\langle \cup_{F\in \mathcal{S}} F \rangle$ is a $(k,l+1,H)$-kernel by walks in $D$.
\end{prop}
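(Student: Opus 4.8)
The plan is to verify the two defining clauses of a $(k,l+1,H)$-kernel by walks for an arbitrary kernel by paths $K$ of $D_{1}:=D\langle\cup_{F\in\mathcal{S}}F\rangle$. The $(l+1,H)$-absorbency clause costs nothing: the hypotheses here are precisely those of Proposition \ref{c1.p2} ($D$ has no isolated vertices, $\mathscr{F}$ is walk-preservative, and $\mathcal{S}$ is independent and $l$-absorbent in $C_{\mathscr{F}}(D)$ with $l\geq 1$), so that proposition applies directly and yields that $K$ is $(l+1,H)$-absorbent by walks in $D$. Hence all the work is in the $(k,H)$-independence clause, and this is exactly where the hypothesis $N^{+}(\mathcal{S})=\emptyset$ is used.

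For the independence I would first record a claim: for every $x\in K$, every arc of $D$ issuing from $x$ lies in some class of $\mathcal{S}$. To prove it, observe that $D_{1}$ is arc-induced, hence has no isolated vertices, so Lemma \ref{c0.l1} gives $d^{-}_{D_{1}}(x)\neq 0$; thus $x$ has an in-arc lying in some $F\in\mathcal{S}$. If $(x,w)$ is any arc of $D$ lying in a class $G$, then $d^{-}(x)\neq 0$ and $d^{+}(x)\neq 0$, so Lemma \ref{obs1}(a) gives $(F,G)\in A(C_{\mathscr{F}}(D))$; but since $\mathcal{S}$ is independent and $N^{+}(\mathcal{S})=\emptyset$, the only arcs of $C_{\mathscr{F}}(D)$ leaving a vertex of $\mathcal{S}$ are loops, so $G=F\in\mathcal{S}$.

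The next step is to propagate this along walks. Let $T=(x_{0},\ldots,x_{n})$ be a walk in $D$ with $x_{0}\in K$ and $n\geq 1$, and let $F_{0}\in\mathcal{S}$ be the class of $(x_{0},x_{1})$, which lies in $\mathcal{S}$ by the claim. I claim every arc of $T$ lies in $F_{0}$: otherwise, letting $(x_{i},x_{i+1})\in G$ be the first arc of $T$ not in $F_{0}$ (so $i\geq 1$ and $(x_{i-1},x_{i})\in F_{0}$), we would get $(F_{0},G)\in A(C_{\mathscr{F}}(D))$ with $G\neq F_{0}$, impossible since the only out-arcs of $F_{0}\in\mathcal{S}$ are loops. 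Thus $T$ is a walk in $D\langle F_{0}\rangle\subseteq D_{1}$. Consequently, if $x_{0}$ and $x_{n}$ are two distinct vertices of $K$, then $T$ contains an $x_{0}x_{n}$-path inside $D_{1}$, contradicting the independence by paths of $K$ in $D_{1}$. Therefore no walk in $D$ joins two distinct vertices of $K$, so $K$ is vacuously $(k,H)$-independent by walks for every $k\geq 2$, and combining with the absorbency we conclude that $K$ is a $(k,l+1,H)$-kernel by walks in $D$ for every $k\geq 2$.

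The crux is the claim in the second paragraph: the point is that Lemma \ref{c0.l1} secretly hands us an in-arc of $x$ \emph{already inside $\mathcal{S}$}, which is exactly what allows Lemma \ref{obs1}(a) to convert the hypothesis $N^{+}(\mathcal{S})=\emptyset$ into control over the \emph{out}-classes of $x$; once this is available the propagation along $T$ is routine. I would also do a brief sanity check of the degenerate cases — notably that $\mathcal{S}$, and hence $D_{1}$, is nonempty (every class of a partition is nonempty), so that Lemma \ref{c0.l1} really does apply, and that the endpoint $x_{n}$ is allowed to have out-degree $0$ in $D$ without affecting the argument — but these present no difficulty.
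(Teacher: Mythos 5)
Your proposal is correct and follows essentially the same route as the paper: both use Lemma \ref{c0.l1}/\ref{c1.l0} to anchor an in-arc of a kernel vertex in a class of $\mathcal{S}$, then use the independence of $\mathcal{S}$ together with $N^{+}(\mathcal{S})=\emptyset$ to force every subsequent arc of a walk leaving that vertex to stay in the same class, so the walk lives in $D\langle\cup_{F\in\mathcal{S}}F\rangle$ and contradicts the path-independence of $K$. The only cosmetic difference is that you phrase this as a forward propagation along the walk, while the paper locates the first arc that would escape $F_{0}$ and derives $G\in N^{+}(\mathcal{S})$ directly.
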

\begin{proof}
Let $K$ be a kernel by paths in $D'=D\langle \cup _{F\in \mathcal{S}} F \rangle$. Since $\mathscr{F}$ is walk preservative, it follows from  Proposition \ref{c1.p2} that $K$ is an $(l+1,H)$-absorbent set by walks in $D$. Now, in order to prove that $K$ is a $(k,H)$-independent set by walks in $D$ for every $k \geq 2$, we will show that $K$ is a path-independent set in $D$. Proceeding by contradiction, suppose that there exists an $x_{1}x_{n}$-path in $D$, say $T'=(x_{1}, \ldots , x_{n})$, such that $\{ x_{1}, x_{n} \} \subseteq K$. Consider $F_{0} \in N^{-}_{\mathscr{F}}(x_{1})\cap \mathcal{S}$ (Lemma \ref{c1.l0}). Hence, there exists $x_{0} \in V(D)$ such that $(x_{0}, x_{1}) \in F_{0} \cap A^{-}(x_{1})$, and let $T=(x_{0}, x_{1}) \cup T'$ be.

If $A(T) \subseteq F_{0}$, then $A(T') \subseteq F_{0}$, which implies that $T'$ is an $x_{1}x_{n}$-path in $D'$, contradicting the fact that $K$ is an independent set by paths in $D'$. Hence, $A(T)\not \subseteq F_{0}$. Let $t = min \{ i \in \{ 1, \ldots , n-1 \} : (x_{i}, x_{i+1}) \notin F_{0} \}$. It follows that $(x_{t-1}, x_{t}) \in F_{0}$, and $(x_{t}, x_{t+1}) \in G$ for some $G \in \mathscr{F}$ with $F\neq G$. Notice that $(F,G) \in A(C_{\mathscr{F}}(D))$ and, since $\mathcal{S}$ is an independent set in $C_{\mathscr{F}}(D)$, then $G \notin \mathcal{S}$. Therefore, $G \in N^{+}(\mathcal{S})$ which contradicts the assumption that $N^{+}(\mathcal{S})=\emptyset$. Hence, $K$ is path-independent in $D$, which implies that $K$ is a $(k,H)$-independent set by walks in $D$ for every $k \geq 2$.

Therefore $K$ is a $(k,l+1,H)$-kernel by walks in $D$ for every $k \geq 2$.
\end{proof}

\begin{prop}
\label{unilateraltheorem}
Let $D$ be an $H$-colored digraph with not isolated vertices, $\mathscr{F}$ a walk-preservative $H$-class partition of $A(D)$, and $\mathcal{S}$ a $(k,l)$-kernel in $C_{\mathscr{F}}(D)$ such that $k \geq 3$ and $l \geq 1$. If for every $F \in \mathcal{S}$, $D\langle F \rangle$ is unilateral and has no sinks, 
 then every kernel by paths in $D\langle \cup _{F \in \mathcal{S}} F \rangle$ is a $(k-1,l+1,H)$-kernel by walks in $D$.
\end{prop}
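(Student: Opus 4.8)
The plan is to fix an arbitrary kernel by paths $K$ in $D':=D\langle\cup_{F\in\mathcal{S}}F\rangle$ and to verify the two defining properties of a $(k-1,l+1,H)$-kernel by walks separately. Since $\mathcal{S}$ is a $(k,l)$-kernel of $C_{\mathscr{F}}(D)$, it is in particular an independent and $l$-absorbent set in $C_{\mathscr{F}}(D)$; as $D$ has no isolated vertices and $\mathscr{F}$ is walk-preservative, Proposition~\ref{c1.p2} applies and gives immediately that $K$ is an $(l+1,H)$-absorbent set by walks in $D$. So the whole task reduces to showing that $K$ is a $(k-1,H)$-independent set by walks in $D$.

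For this, let $x_{0},x_{n}\in K$ be two distinct vertices and let $T=(x_{0},\ldots,x_{n})$ be any $x_{0}x_{n}$-walk in $D$; I must prove $l_{H}(T)\ge k-1$, that is, $|O_{H}(T)|\ge k-2$. Following the construction used in the proof of Proposition~\ref{propgird2}, write $O_{H}(T)=\{\alpha_{1}<\cdots<\alpha_{t}\}$ (allowing $t=0$), let $F_{0}\in\mathscr{F}$ be the class with $(x_{0},x_{1})\in F_{0}$, and for $1\le i\le t$ let $F_{i}\in\mathscr{F}$ be the class with $(x_{\alpha_{i}},x_{\alpha_{i}}^{+})\in F_{i}$; then $(F_{0},F_{1},\ldots,F_{t})$ is a walk in $C_{\mathscr{F}}(D)$, and, because no obstruction of $T$ lies strictly after $x_{\alpha_{t}}$, Lemma~\ref{obs1}(c) shows that the terminal arc $(x_{n-1},x_{n})$ of $T$ also belongs to $F_{t}$. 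The idea is to extend this walk at both ends to a walk of $C_{\mathscr{F}}(D)$ whose two endpoints lie in $\mathcal{S}$, and then invoke the $k$-independence of $\mathcal{S}$.

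Concretely, by Lemma~\ref{c1.l0} I choose $B\in N^{-}_{\mathscr{F}}(x_{0})\cap\mathcal{S}$ and $E\in N^{-}_{\mathscr{F}}(x_{n})\cap\mathcal{S}$; in particular $x_{0}\in V(D\langle B\rangle)$ and $x_{n}\in V(D\langle E\rangle)$. Since $E\in\mathcal{S}$ and $D\langle E\rangle$ has no sinks, $x_{n}$ has an out-arc $(x_{n},z_{n})\in E$, and together with $(x_{n-1},x_{n})\in F_{t}$ this yields $(F_{t},E)\in A(C_{\mathscr{F}}(D))$ unless $F_{t}=E$; symmetrically, an in-arc of $x_{0}$ lying in $B$ together with $(x_{0},x_{1})\in F_{0}$ yields $(B,F_{0})\in A(C_{\mathscr{F}}(D))$ unless $B=F_{0}$. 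Prepending $B$ (when $B\ne F_{0}$) and appending $E$ (when $F_{t}\ne E$) to $(F_{0},\ldots,F_{t})$ therefore produces a $BE$-walk $W$ in $C_{\mathscr{F}}(D)$ with $l(W)\le t+2$ and $\{B,E\}\subseteq\mathcal{S}$. Now Lemma~\ref{unilateral.lema}, applied with $x_{0},x_{n}\in K$, $x_{0}\in V(D\langle B\rangle)$, $x_{n}\in V(D\langle E\rangle)$ and $D\langle B\rangle$ unilateral, forces $B\ne E$; hence the $k$-independence of $\mathcal{S}$ gives $l(W)\ge k$, so $t\ge k-2$ and $l_{H}(T)=t+1\ge k-1$, as required. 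Combined with the first paragraph, this shows that $K$ is a $(k-1,l+1,H)$-kernel by walks in $D$.

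The only non-routine step is arranging that the auxiliary walk $W$ genuinely joins two \emph{distinct} vertices of $\mathcal{S}$, so that the $k$-independence of $\mathcal{S}$ can be applied: this is exactly where the unilateral hypothesis enters (through Lemma~\ref{unilateral.lema}), and where the no-sinks hypothesis is used — to leave $x_{n}$ from inside a class of $\mathcal{S}$, and, via Lemma~\ref{c1.l0}, to reach $x_{0}$ from a class of $\mathcal{S}$. One should also record the degenerate case $t=0$: then $T$ is an $H$-walk and $W$ has length at most $2$, so $l(W)\ge k\ge 3$ is an outright contradiction and this case cannot occur at all. Note finally that the estimate $l_{H}(T)\ge k-1$ is what the construction produces in the worst case — when both ends of $T$ must be extended — which is precisely why the independence parameter obtained is $k-1$ rather than $k$; and, as in Lemma~\ref{unilateral.lema}, the property of $K$ actually used throughout is independence by paths, which every kernel by paths possesses.
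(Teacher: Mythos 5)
Your proposal is correct and follows essentially the same route as the paper: absorbency via Proposition~\ref{c1.p2}, and independence by lifting the walk to $C_{\mathscr{F}}(D)$, extending both ends into $\mathcal{S}$ using Lemma~\ref{c1.l0} together with the no-sinks hypothesis, forcing the two end-classes to differ via Lemma~\ref{unilateral.lema}, and invoking the $k$-independence of $\mathcal{S}$. The only difference is cosmetic: you bound $l(W)\le t+2$ uniformly where the paper splits into four cases according to whether $F_{0}$ and $F_{t}$ lie in $\mathcal{S}$, which also subsumes the paper's separate claim that $O_{H}(C)\neq\emptyset$.
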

\begin{proof}
Let $K$ be a kernel by paths in $D_{1}= D\langle \cup_{F\in \mathcal{S}} F \rangle$.  Since $\mathscr{F}$ is a walk-preservative $H$-class partition, it follows from Proposition \ref{c1.p2} that $K$ is an $(l+1,H)$-absorbent set by walks in $D$. It only remains to show that $K$ is a $(k-1, H)$-independent set by walks in $D$.  Consider a walk in $D$, say $C=(x_{0}, \ldots , x_{n})$, such that $\{ x_{0}, x_{n} \} \subseteq K$.
	\begin{description}
	\item \textbf{Claim 1.}  $N^{-}_{\mathscr{F}}(x_{0}) \cap \mathcal{S} \neq \emptyset$ and $N^{+}_{\mathscr{F}}(x_{n}) \cap \mathcal{S} \neq \emptyset$.

	Since $x_{0} \in K$,  it follows from Lemma \ref{c1.l0} that  $N^{-}_{\mathscr{F}}(x_{0}) \cap \mathcal{S} \neq \emptyset$. On the other hand, since $x_{n} \in V(D_{1})$, then $x_{n} \in V(D \langle F \rangle )$ for some $F \in \mathcal{S}$. By hypothesis, $D \langle F \rangle$ has no sinks, which implies that $A^{+}(x_{n}) \cap F \neq \emptyset$. Hence $F \in N^{+}_{\mathscr{F}}(x_{n})$, concluding that  $F \in N^{+}_{\mathscr{F}}(x_{n}) \cap \mathcal{S}$, and the claim holds.
	\end{description}
	
By Claim 1, consider $F' \in N^{-}_{\mathscr{F}}(x_{0}) \cap \mathcal{S}$ and $F'' \in N^{+}_{\mathscr{F}}(x_{n}) \cap \mathcal{S}$. It follows from Lemma \ref{unilateral.lema} that $F' \neq F''$ (*).
	
	\begin{description}
	\item \textbf{Claim 2.} $O_{H}(C) \neq \emptyset$.
	
	Proceeding by contradiction, suppose that $O_{H}(C)= \emptyset$. Hence, there exists $F \in \mathscr{F}$ such that $A(C)\subseteq F$. By definition of $C_{\mathscr{F}}(D)$, we have that $\{ (F', F) , (F, F'') \} \subseteq A(C_{\mathscr{F}}(D))$. Since $\{ F', F'' \} \subseteq \mathcal{S}$ and $\mathcal{S}$ is an independent set in $C_{\mathscr{F}}(D)$, then $F \neq F'$ and $F \neq F''$. Hence, $(F', F, F'')$ is an $F'F''$-path in $C_{\mathscr{F}}(D)$, which is not possible since $\mathcal{S}$ is a $k$-independent set with $k \geq 3$. Therefore, $O_{H}(C) \neq \emptyset$, and the claim holds. 
	\end{description}

By Claim 2, suppose that $O_{H}(C)=\{ \alpha_{i} : i \in \{1, \ldots , t \} \}$ where $t \geq 1$, and $\alpha_{i} \leq \alpha_{i+1}$ for every $i \in  \{ 1, \ldots , t-1 \}$. For every $i \in \{ 1, \ldots , t \}$, let $F_{i} \in \mathscr{F}$ such that $(x_{\alpha_{i}}, x_{\alpha_{i}}^{+} ) \in F_{i}$, and $F_{0} \in \mathscr{F}$ such that $(x_{0}, x_{1}) \in F_{0}$. 
Notice that $x_{0} \in V(D \langle F_{0} \rangle)$ and $x_{n} \in V(D \langle F_{t} \rangle )$. On the other hand, it follows from the definition of $C_{\mathscr{F}}(D)$ that $C_{0}=(F_{0}, F_{1}, \ldots , F_{t})$ is a walk in $C_{\mathscr{F}}(D)$.  Consider the following cases:

\begin{description}
	\item \textbf{Case 1.} $F_{0} \in \mathcal{S}$.
	
		First, suppose that $F_{t} \in \mathcal{S}$. Notice that $C_{0}$ is a walk in $C_{\mathscr{F}}(D)$ such that $l_{H}(C) - 1=l(C_{0})$. On the other hand, we have that $\{ F_{0}, F_{t} \} \subseteq \mathcal{S}$, $x_{0} \in V(D \langle F_{0} \rangle )$ and $x_{n} \in V(D\langle F_{t} \rangle )$, which implies that $F_{0} \neq F_{t}$ (Lemma \ref{unilateral.lema}). Since $\mathcal{S}$ is a $k$-independent set in $C_{\mathscr{F}}(D)$, we have that $l(C_{0}) \geq k$. We can conclude that $l_{H}(C) \geq k-1$. 	
		
		Now suppose that $F_{t} \notin \mathcal{S}$, and consider $C_{1}=C_{0} \cup (F_{t}, F'')$. Notice that $C_{1}$ is a walk in $C_{\mathscr{F}}(D)$ such that $l_{H}(C)=l(C_{1})$. On the other hand, we have that $\{ F_{0}, F'' \} \subseteq \mathcal{S}$, $x_{0} \in V(D \langle F_{0} \rangle )$ and $x_{n} \in V(D\langle F'' \rangle )$, which implies that $F_{0} \neq F''$ (Lemma \ref{unilateral.lema}).  Since $\mathcal{S}$ is a $k$-independent set in $C_{\mathscr{F}}(D)$, then $l(C_{1}) \geq k$. We can conclude that $l_{H}(C)\geq k-1$.
			
	\item \textbf{Case 2.} $F_{0} \in V(C_{\mathscr{F}}(D))\setminus \mathcal{S}$.
	
			First, suppose that $F_{t} \in \mathcal{S}$, and consider $C_{2}=(F', F_{0} ) \cup C_{0}$. Notice that $C_{2}$ is a walk in $C_{\mathscr{F}}(D)$ such that $l_{H}(C)=l(C_{2})$. On the other hand, we have that $\{ F', F_{t} \} \subseteq \mathcal{S}$, $x_{0} \in V(D \langle F' \rangle )$ and $x_{n} \in V(D\langle F_{t} \rangle )$, which implies that $F' \neq F_{t}$ (Lemma \ref{unilateral.lema}). Since $\mathcal{S}$ is a $k$-independent set in $C_{\mathscr{F}}(D)$, then $l(C_{2}) \geq k$. We can conclude that $l_{H}(C)\geq k-1$.
			
			Now suppose that $F_{t} \notin \mathcal{S}$, and consider $C_{3}=(F', F_{0} ) \cup C_{0} \cup (F_{t}, F'')$. Notice that $C_{3}$ is a walk in $C_{\mathscr{F}}(D)$ such that $l_{H}(C) + 1=l(C_{3})$. On the other hand, by (*) we have that $F' \neq F''$ and, since $\mathcal{S}$ is a $k$-independent set in $C_{\mathscr{F}}(D)$, then $l(C_{3}) \geq k$. We can conclude that $l_{H}(C)\geq k-1$.
\end{description}

It follows from the previous cases that $K$ is a $(k-1,H)$-independent set by walks in $D$. Therefore, $K$ is a $(k-1,l+1)$-kernel by walks in $D$. 
\end{proof}

Remark that, given an $H$-colored digraph that satisfies the hypothesis of proposition \ref{emptyoutneighborhood}, every kernel by walks in $D\langle \cup _{F \in \mathcal{S}} F \rangle $ is a $(k,l+1,H)$-kernel by walks in $D$. In the same way, for every $H$-colored digraph that satisfies the hypothesis of proposition \ref{unilateraltheorem}, every kernel by walks in $D\langle \cup _{F \in \mathcal{S}} F \rangle $ is a $(k-1,l+1,H)$-kernel by walks in $D$

\begin{prop}
\label{propstrong2}
Let $D$ be an $H$-colored digraph, $\mathscr{F}$ an $H$-class partition of $A(D)$ and  $\mathcal{S}$ a $(k,l)$-kernel of $C_{\mathscr{F}}(D)$ for some $k \geq 3$ and $l \geq 1$. If for every $F \in \mathcal{S}$, $D \langle F \rangle$ is strongly connected and has an obstruction-free vertex in $D$, then $D$ has a $(k+1,l+1,H)$-kernel by walks.
\end{prop}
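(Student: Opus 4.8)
The plan is to follow the scheme of Proposition~\ref{unilateraltheorem}, but to replace the arbitrary kernel by paths used there with one consisting entirely of obstruction-free vertices; this extra control is exactly what upgrades the bound on the $H$-length from $k-1$ to $k+1$. By Lemma~\ref{noisolatedvertices} (and handling $A(D)=\emptyset$ directly, where $V(D)$ itself works) we may assume $D$ has no isolated vertices, so that Proposition~\ref{c1.p2} is applicable.

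First I would set $D_1=D\langle\bigcup_{F\in\mathcal S}F\rangle$. Since $k\ge 3$, $\mathcal S$ is in particular an independent set in $C_{\mathscr F}(D)$, so Lemma~\ref{lemmastronglyconnected}(c) gives that the digraphs $D\langle F\rangle$ with $F\in\mathcal S$ have pairwise disjoint vertex sets; as $\mathscr F$ is a partition of $A(D)$ they are also arc-disjoint, so $D_1$ is the disjoint union of the nontrivial strongly connected digraphs $D\langle F\rangle$, $F\in\mathcal S$. For each $F\in\mathcal S$ fix an obstruction-free vertex $w_F$ of $D\langle F\rangle$ (this exists by hypothesis), and put $K=\{w_F:F\in\mathcal S\}$. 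Because a strongly connected digraph is its own unique terminal strong component, $K$ is a kernel by paths of $D_1$. It remains to verify the two defining properties of a $(k+1,l+1,H)$-kernel by walks.

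Absorbency is immediate: $\mathscr F$ is walk-preservative by Lemma~\ref{lemmastronglyconnected}(a), and $\mathcal S$, being a $(k,l)$-kernel of $C_{\mathscr F}(D)$, is independent and $l$-absorbent there, so Proposition~\ref{c1.p2} shows that $K$ is $(l+1,H)$-absorbent by walks in $D$. For independence, let $C=(x_0,\dots,x_n)$ be a walk in $D$ with $x_0\ne x_n$ and $\{x_0,x_n\}\subseteq K$, say $x_0=w_{F'}$ and $x_n=w_{F''}$ with $\{F',F''\}\subseteq\mathcal S$; since distinct classes carry distinct $w_F$'s (disjoint vertex sets), $F'\ne F''$. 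Arguing as in the proof of Lemma~\ref{obs1}(b) — $w_{F'}$ is obstruction-free, it lies on an arc of $F'$, and $D\langle F'\rangle$ is a nontrivial strongly connected digraph — one gets $A(w_{F'})\subseteq F'$, and likewise $A(w_{F''})\subseteq F''$. Hence the first arc of $C$ belongs to $F'$ and the last arc of $C$ belongs to $F''$.

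From here the bookkeeping of Proposition~\ref{unilateraltheorem} applies almost verbatim. First $O_H(C)\ne\emptyset$: otherwise Lemma~\ref{obs1}(c) would place all arcs of $C$ in a single class, which would then equal both $F'$ and $F''$. Write $O_H(C)=\{\alpha_1<\dots<\alpha_t\}$ with $t\ge 1$, let $G_i\in\mathscr F$ be the class of $(x_{\alpha_i},x_{\alpha_i+1})$ for $i\in\{1,\dots,t\}$, and set $G_0=F'$. Each of the maximal obstruction-free segments of $C$ (from $x_0$ to $x_{\alpha_1}$, from $x_{\alpha_i}$ to $x_{\alpha_{i+1}}$, and from $x_{\alpha_t}$ to $x_n$) is an $H$-walk, so by Lemma~\ref{obs1}(c) all of its arcs lie in one class; this forces $(x_{\alpha_1-1},x_{\alpha_1})\in G_0$, forces $(x_{n-1},x_n)\in G_t$ and hence $G_t=F''$ by the previous paragraph, and yields $(G_{i-1},G_i)\in A(C_{\mathscr F}(D))$ for each $i\in\{1,\dots,t\}$. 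Thus $(F'=G_0,G_1,\dots,G_{t-1},G_t=F'')$ is an $F'F''$-walk in $C_{\mathscr F}(D)$ between distinct vertices of $\mathcal S$, so $k$-independence of $\mathcal S$ gives $t\ge k$, and therefore $l_H(C)=|O_H(C)|+1=t+1\ge k+1$. Hence $K$ is $(k+1,H)$-independent by walks, and so a $(k+1,l+1,H)$-kernel by walks in $D$. The only genuinely new ingredient compared with Proposition~\ref{unilateraltheorem}, and the point I expect to need the most care, is the choice of $K$: it is precisely the inclusions $A(w_{F'})\subseteq F'$ and $A(w_{F''})\subseteq F''$ that pin the induced walk in $C_{\mathscr F}(D)$ to \emph{start and end inside} $\mathcal S$ — this is what buys the two extra units of $H$-length — and what makes this admissible is that $\mathcal S$ is independent with strongly connected classes, so $D_1$ splits as a vertex-disjoint union of strongly connected pieces and one is free to pick the obstruction-free vertex in each.
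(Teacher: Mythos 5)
Your proposal is correct and follows essentially the same route as the paper: you take $K$ to consist of one obstruction-free vertex per class of $\mathcal S$, use Lemma \ref{lemmastronglyconnected}(c) for disjointness and hence path-independence, Lemma \ref{lemmastronglyconnected}(a) with Proposition \ref{c1.p2} for $(l+1,H)$-absorbency, and the induced walk $(G_0,\dots,G_t)$ in $C_{\mathscr F}(D)$ pinned to start and end in $\mathcal S$ via $A(w_F)\subseteq F$ to get $l_H(C)=t+1\ge k+1$. The only cosmetic difference is that you spell out explicitly why consecutive $G_{i-1},G_i$ are adjacent and why $G_t=F''$, which the paper leaves terse.
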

\begin{proof}
First, suppose that $D$ has not isolated vertices. Let $\mathcal{S}=\{ F_{1}, \ldots , F_{r} \}$ for some $r \geq 1$, and for every $i \in \{1, \ldots , r\}$, let $z_{i} \in V(D \langle F_{i} \rangle )$ such that $z_{i}$ is obstruction-free in $D$. By Lemma \ref{lemmastronglyconnected} (c) we have that $z_{i} \neq z_{j}$ whenever $\{ i, j \} \subseteq \{ 1, \ldots , r \}$ and $i \neq j$ .

\begin{description}
	\item \textbf{Claim 1.} $K=\{ z_{i} : i \in \{ 1, \ldots , r \}\}$ is a kernel by paths in $D_{1}=D \langle \cup _{F\in \mathcal{S}} F \rangle$.
	
	 In order to show that $K$ is an absorbent set by paths in $D_{1}$, consider $w \in V(D_{1}) \setminus K$. Since $w \in V(D_{1})$, then there exists $j \in \{ 1, \ldots , r \}$ such that $w \in V(D \langle F_{j} \rangle)$. It follows from the fact that $D\langle F_{j} \rangle$ is strongly connected that there exists a $wz_{j}$-path in $D \langle F_{j} \rangle$, say $P$. Hence, $P$ is a $wK$-path in $D_{1}$, concluding that $K$ is an absorbent set by paths in $D_{1}$.

It only remains to show that $K$ is a path-independent set in $D_{1}$. It follows from Lemma \ref{lemmastronglyconnected} (c) that $V(D\langle F_{i} \rangle ) \cap V(D \langle F_{j} \rangle ) = \emptyset$ for every $\{ i, j \} \subseteq \{ 1, \ldots r \}$ with $i \neq j$, which implies that there is no $z_{i}z_{j}$-path in $D_{1}$ for every $\{ i, j \} \subseteq \{ 1, \ldots r \}$ with $i \neq j$. Hence, $K$ is a path-independent set in $D_{1}$, and the claim holds.
\end{description}

Now, we will show that $K$ is a $(k+1, l+1, H)$-kernel by walks in $D$. In order to show that $K$ is an $(l+1,H)$-absorbent set by walks in $D$, notice that $\mathscr{F}$ is a walk-preservative partition of $A(D)$ (Lemma \ref{lemmastronglyconnected} (a)), and, since $K$ is a kernel by walks in $D_{1}$, we can conclude from Proposition \ref{c1.p2} that $K$ is an $(l+1,H)$-absorbent set by walks in $D$.

 It only remains to show that $K$ is a $(k+1, H)$-independent set by walks in $D$. Consider $\{ z_{i}, z_{j} \} \subseteq K$ with $i \neq j$, and a  $z_{i}z_{j}$-walk in $D$, say $C=(z_{i}=x_{0}, x_{1} \ldots , x_{n} = z_{j})$. 

\begin{description}
\item \textbf{Claim 2.} $O_{H}(C) \neq \emptyset$.

Proceeding by contradiction, suppose that $O_{H}(C) = \emptyset$, which implies that there exists $F \in \mathscr{F}$ such that $A(C)\subseteq F$. Since $z_{i}$ and $z_{j}$ are obstruction-free in $D$, then $F = F_{i}$ and $F = F_{j}$ (Lemma \ref{obs1} (b)), concluding that $F_{i}=F_{j}$, which is not possible since $F_{i} \neq F_{j}$. Hence, $O_{H}(C) \neq \emptyset$ and the claim holds.
\end{description}

By Claim 2, suppose that $O_{H}(C)=\{ \alpha_{i} : i \in \{1, \ldots , t \} \}$ where $t \geq 1$, and $\alpha_{i} \leq \alpha_{i+1}$ for every $ i \in \{ 1, \ldots , t-1 \}$. For every $i \in \{ 1, \ldots , t \}$, let $G_{i} \in \mathscr{F}$ such that $(x_{\alpha_{i}}, x_{\alpha_{i}}^{+} ) \in G_{i}$, and $G_{0} \in \mathscr{F}$ such that $(x_{0}, x_{1}) \in G_{0}$.
 It follows from the definition of $C_{\mathscr{F}}(D)$ that $C'=(G_{0}, G_{1}, \ldots , G_{t})$ is a walk in $C_{\mathscr{F}}(D)$. Notice that $z_{i} \in V(D\langle G_{0} \rangle )$, $z_{j} \in V(D\langle G_{t} \rangle )$, and $l_{H}(C)= l(C') +1$. Since $z_{i}$ and $z_{j}$ are obstruction-free in $D$, then $F_{i} =G_{0}$ and $F_{j} = G_{t}$ (Lemma \ref{obs1} (b)), which implies that $\{ G_{0}, G_{t} \} \subseteq \mathcal{S}$ and $G_{0} \neq G_{t}$. Since $\mathcal{S}$ is a $k$-independent set in $C_{\mathscr{F}}(D)$, then $l(C') \geq k $, which implies that $ l_{H}(C) \geq k+1$. Hence, $K$ is a $(k+1, H)$-independent set by walks in $D$. Therefore, $K$ is a $(k+1, l+1, H)$-kernel by walks in $D$.

On the other hand, if $W=\{x\in V(D) : d(x) =0\}$ is nonempty, then by the previous proof, we have that $D - W$ has a $(k+1,l +1 ,H)$-kernel by walks, say $K$. By Lemma \ref{noisolatedvertices}, we can conclude that $K \cup W$ is a  $(k+1,l+1, H)$-kernel by walks in $D$.
\end{proof}

In the same spirit that in the previous results, given an $H$-colored digraph that satisfies the hypothesis of Proposition \ref{propstrong2}, it is easy to find a $(k+1, l+1,H)$-kernel by walks, by taking the obstruction-free vertex in $D\langle F \rangle$ for every $F \in \mathcal{S}$.

\section{Some consequences}

Finally, we present the following theorems which are direct consequences of the results presented in the previous section. 

\begin{theorem}
Let $D$ be an $H$-colored digraph, $\mathscr{F}$ a walk-preservative $H$-class partition of $A(D)$, and $\{k, l \} \subseteq \mathbb{N}$ such that $k \geq 2$ and $l \geq 1$. If $\mathcal{S}$ is a $(k,l)$-kernel in $C_{\mathscr{F}}(D)$ such that $N^{+}(\mathcal{S}) = \emptyset$, then $D$ has a $(k,l+1,H)$-kernel by walks.
\end{theorem}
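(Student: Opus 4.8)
The plan is to reduce directly to Proposition \ref{emptyoutneighborhood}, which is tailor-made for the case $N^{+}(\mathcal{S})=\emptyset$. First I would handle the degenerate vertices: set $W=\{x\in V(D):d(x)=0\}$ and pass to $D-W$, so that $D-W$ has no isolated vertices (and if $D$ has no arcs at all the statement is trivial, or handled by Lemma \ref{noisolatedvertices} with $K=\emptyset$). Note that deleting isolated vertices does not change $A(D)$, hence $\mathscr{F}$ remains a walk-preservative $H$-class partition of $A(D-W)$ and $C_{\mathscr{F}}(D-W)=C_{\mathscr{F}}(D)$, so $\mathcal{S}$ is still a $(k,l)$-kernel of the $H$-class digraph with $N^{+}(\mathcal{S})=\emptyset$.

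Next, observe that a $(k,l)$-kernel $\mathcal{S}$ is by definition $k$-independent (hence, since $k\geq 2$, an independent set in $C_{\mathscr{F}}(D)$) and $l$-absorbent. Therefore $\mathcal{S}$ satisfies exactly the hypotheses of Proposition \ref{emptyoutneighborhood} applied to $D-W$: a walk-preservative $H$-class partition, an independent and $l$-absorbent set in $C_{\mathscr{F}}(D)$ with empty proper out-neighborhood. Proposition \ref{emptyoutneighborhood} then guarantees that every kernel by paths in $D-W\langle \cup_{F\in\mathcal{S}}F\rangle$ is a $(k,l+1,H)$-kernel by walks in $D-W$; and since every digraph has a kernel by paths (Berge), such a kernel by paths $K$ exists. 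Finally, apply Lemma \ref{noisolatedvertices} to conclude that $K\cup W$ is a $(k,l+1,H)$-kernel by walks in $D$.

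I do not expect a genuine obstacle here: the statement is essentially a repackaging of Proposition \ref{emptyoutneighborhood} together with the bookkeeping that a $(k,l)$-kernel is automatically independent and $l$-absorbent, plus the standard isolated-vertex cleanup via Lemma \ref{noisolatedvertices}. The only point needing a word of care is checking that passing to $D-W$ genuinely preserves the $H$-class partition and the $H$-class digraph — but this is immediate because removing isolated vertices removes no arcs, and the defining conditions of $\mathscr{F}$, walk-preservativity, and $C_{\mathscr{F}}$ are all phrased in terms of arcs and of vertices incident with arcs. Hence the proof is short: reduce to the no-isolated-vertices case, invoke Proposition \ref{emptyoutneighborhood}, and lift back with Lemma \ref{noisolatedvertices}.
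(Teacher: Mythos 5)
Your proposal is correct and follows essentially the same route as the paper: delete the isolated vertices $W$, apply Proposition \ref{emptyoutneighborhood} to $D-W$ (using that a $(k,l)$-kernel with $k\geq 2$ is in particular independent and $l$-absorbent), and lift back via Lemma \ref{noisolatedvertices}. Your version is in fact slightly more careful than the paper's, since you explicitly check that removing isolated vertices leaves $\mathscr{F}$ and $C_{\mathscr{F}}(D)$ unchanged and you record the conclusion with the correct parameter $l+1$ (the paper's proof writes $(k,l,H)$ in two places where $(k,l+1,H)$ is meant).
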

\begin{proof}
Let $W=\{ x \in V(D) : d(x)= 0 \}$ and $\mathcal{S}$ a $(k,l)$-kernel in $C_{\mathscr{F}}(D)$ such that $N^{+}(\mathcal{S}) = \emptyset$. Since $\mathcal{S}$ is an independent and $l$-absorbent set in $C_{\mathscr{F}}(D)$, it follows from Proposition \ref{emptyoutneighborhood} that $D-W$ has a $(k,l,H)$-kernel by walks, say $K$. By Lemma \ref{noisolatedvertices}, we can conclude that $K \cup W$ is a $(k,l,H)$-kernel by walks in $D$.
\end{proof}

\begin{theorem}  
\label{propgird}
Let $D$ be a strongly connected $H$-colored digraph and $\mathscr{F}$ a walk-preservative $H$-class partition of $A(D)$ such that $C_{\mathscr{F}}(D)$ has a $(k,l)$-kernel, say $\mathcal{S}$. If the following conditions hold:
	\begin{enumerate}[a)]
	\item Every cycle in $C_{\mathscr{F}}(D)$ is either a loop or has length at least $k$.
	
	\item For every $x \in V(D)$ such that $N_{\mathscr{F}}(x) \cap S \neq \emptyset $ and $N_{\mathscr{F}}(x) \cap N^{+}(\mathcal{S}) \neq \emptyset$, we have that $N^{-}_{\mathscr{F}}(x) \subseteq \mathcal{S}$.
	\end{enumerate}
Then $D$ has a $(k,l+1,H)$-kernel by walks.
\end{theorem}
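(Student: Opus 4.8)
The plan is to recognise Theorem~\ref{propgird} as essentially a corollary of Proposition~\ref{propgird2}: every hypothesis of Theorem~\ref{propgird} (the walk-preservative $H$-class partition $\mathscr{F}$, the $(k,l)$-kernel $\mathcal{S}$ of $C_{\mathscr{F}}(D)$, condition (a) on the cycles of $C_{\mathscr{F}}(D)$, and condition (b)) is verbatim a hypothesis of Proposition~\ref{propgird2}, \emph{except} for the requirement there that $C_{\mathscr{F}}(D)$ have no sinks. So the whole task is to deduce, from the strong connectivity of $D$, that $C_{\mathscr{F}}(D)$ has no sinks, and then to quote Proposition~\ref{propgird2}.

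To carry this out I would first dispose of the trivial partition. If $\mathscr{F}=\emptyset$ (equivalently $A(D)=\emptyset$), then, $D$ being strongly connected, $V(D)$ has at most one vertex and is itself the desired kernel by walks, all conditions being vacuous. If $\mathscr{F}$ has a single class $F$, then $A(D)=F$ and the definition of an $H$-class partition forces $(\rho(u,v),\rho(v,w))\in A(H)$ for every pair of consecutive arcs $(u,v),(v,w)$ of $D$; hence $D$ is an $H$-digraph, so any kernel by paths of $D$ (one exists, since every digraph has one) is, by Lemma~\ref{hdigraph}, a $(k,l+1,H)$-kernel by walks. Thus I may assume $|\mathscr{F}|\geq 2$. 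In that case Lemma~\ref{c1.l2} gives that $C_{\mathscr{F}}(D)$ is strongly connected, and since it then has at least two vertices, each vertex $F$ admits a path to some other vertex, whose first arc exhibits an out-neighbour of $F$ distinct from $F$; so $A^{+}(F)\not\subseteq\{(F,F)\}$ for all $F$, i.e.\ $C_{\mathscr{F}}(D)$ has no sinks. Now Proposition~\ref{propgird2} applies directly and produces a $(k,l+1,H)$-kernel by walks in $D$, as required.

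I do not expect a serious obstacle, since the substance is already packaged in Proposition~\ref{propgird2} and Lemma~\ref{c1.l2}. The only delicate points are (i) that ``sink'' in $C_{\mathscr{F}}(D)$ is taken in the loop-tolerant sense (a vertex $F$ with $A^{+}(F)\subseteq\{(F,F)\}$), so that one cannot merely say ``strongly connected digraphs have no sinks'' but must combine strong connectivity with $|V(C_{\mathscr{F}}(D))|\geq 2$ to produce a genuine, non-loop out-neighbour; and (ii) the separate treatment of the case $|\mathscr{F}|\leq 1$, where $C_{\mathscr{F}}(D)$ is empty or a single looped vertex and Proposition~\ref{propgird2} is not directly applicable.
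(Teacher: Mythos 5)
Your proposal is correct and follows essentially the same route as the paper: the paper likewise reduces to Proposition~\ref{propgird2} by first disposing of the degenerate case ($D$ an $H$-digraph, handled via Lemma~\ref{hdigraph}) and then invoking Lemma~\ref{c1.l2} to conclude that $C_{\mathscr{F}}(D)$ is a non-trivial strongly connected digraph and hence has no sinks. Your phrasing of the case split in terms of $|\mathscr{F}|\leq 1$ versus $|\mathscr{F}|\geq 2$ is equivalent (for strongly connected $D$) to the paper's split on whether $D$ is an $H$-digraph, and your explicit attention to the loop-tolerant notion of sink is a point the paper leaves implicit.
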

\begin{proof}
If $D$ is an $H$-digraph, it follows from Lemma \ref{hdigraph} that  for every $k \geq 2$ and $l \geq 1$, $D$ has a $(k,l,H)-$kernel by walks. Hence, we may assume that $D$ is not an $H$-digraph. It follows that $C_{\mathscr{F}}(D)$ is a non-trivial strongly connected digraph (Lemma \ref{c1.l2}), which implies that $C_{\mathscr{F}}(D)$ has no sinks. By Proposition \ref{propgird2} we have that $D$ has a $(k, l+1,H)$-kernel by walks.
\end{proof}

\begin{theorem}
\label{corunilat}
Let $D$ be an $H$-colored digraph, $\mathscr{F}$ a walk-preservative $H$-class partition of $A(D)$ such that for every $F \in \mathscr{F}$, $D\langle F \rangle$ is unilateral and has no sinks. If  $C_{\mathscr{F}}(D)$ has a $(k,l)$-kernel for some $k \geq 3$ and $l \geq 1$, then $D$ has a $(k-1,l+1,H)$-kernel by walks.
\end{theorem}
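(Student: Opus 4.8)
The plan is to derive this theorem directly from Proposition \ref{unilateraltheorem}. The only gap between the hypotheses of the theorem and those of that proposition is that the proposition additionally assumes $D$ has no isolated vertices (and it speaks of a kernel by paths in a certain arc-induced subdigraph, which always exists since every digraph has a kernel by paths, as recalled after Lemma \ref{c0.l1}). So the whole proof is a reduction: delete the isolated vertices, apply Proposition \ref{unilateraltheorem}, and glue the isolated vertices back via Lemma \ref{noisolatedvertices}.

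First I would dispose of the degenerate case $A(D)=\emptyset$: then every vertex of $D$ is isolated, and $V(D)$ is vacuously a $(k-1,l+1,H)$-kernel by walks. So assume $A(D)\neq\emptyset$ and set $W=\{x\in V(D):d(x)=0\}$. The key observation is that removing isolated vertices does not alter the arc set, i.e. $A(D-W)=A(D)$. Consequently $\mathscr{F}$ is still an $H$-class partition of $A(D-W)$, it is still walk-preservative, $C_{\mathscr{F}}(D-W)=C_{\mathscr{F}}(D)$ (both depend only on the arcs and their colors), and for every $F\in\mathscr{F}$ the subdigraph $(D-W)\langle F\rangle$ coincides with $D\langle F\rangle$, hence is unilateral and has no sinks. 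Moreover $D-W$ has no isolated vertices by construction, and $C_{\mathscr{F}}(D-W)$ still has the $(k,l)$-kernel $\mathcal{S}$ with $k\geq 3$ and $l\geq 1$; in particular, for every $F\in\mathcal{S}\subseteq\mathscr{F}$, $(D-W)\langle F\rangle$ is unilateral and has no sinks.

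Thus $D-W$ satisfies all the hypotheses of Proposition \ref{unilateraltheorem}. Choosing a kernel by paths $K$ in $(D-W)\langle\cup_{F\in\mathcal{S}}F\rangle$ — which exists because every digraph has a kernel by paths — Proposition \ref{unilateraltheorem} gives that $K$ is a $(k-1,l+1,H)$-kernel by walks in $D-W$. Finally, since $D$ has at least one arc, Lemma \ref{noisolatedvertices} applied with parameters $k-1$ and $l+1$ (note $k-1\geq 2$ and $l+1\geq 1$) yields that $K\cup W$ is a $(k-1,l+1,H)$-kernel by walks in $D$, as desired.

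There is essentially no serious obstacle in this argument; the substantive work lives entirely inside Proposition \ref{unilateraltheorem} (the case analysis on the location of the first and last obstructed classes $F_{0},F_{t}$ relative to $\mathcal{S}$, combined with Lemma \ref{unilateral.lema} to force $F'\neq F''$ and with Proposition \ref{c1.p2} for absorbency). The single point in our reduction that needs a sentence of care is precisely the claim that passing from $D$ to $D-W$ leaves $\mathscr{F}$, walk-preservativity, $C_{\mathscr{F}}(\cdot)$, and each $D\langle F\rangle$ unchanged — which is immediate because $W$ consists of isolated vertices, so $A(D-W)=A(D)$. (One could alternatively first test whether $D$ is itself an $H$-digraph, in which case Lemma \ref{hdigraph} already supplies a $(k-1,l+1,H)$-kernel by walks; but this shortcut is unnecessary, since the reduction above handles all cases uniformly.)
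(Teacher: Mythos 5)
Your proposal is correct and follows essentially the same route as the paper's own proof: reduce to the case without isolated vertices, apply Proposition \ref{unilateraltheorem} to a kernel by paths in $D\langle \cup_{F\in\mathcal{S}}F\rangle$, and restore the isolated vertices via Lemma \ref{noisolatedvertices}. Your additional remarks (the degenerate case $A(D)=\emptyset$ and the verification that the hypotheses persist under deletion of isolated vertices) are details the paper leaves implicit, but they do not change the argument.
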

\begin{proof}
First, suppose that $D$ has not isolated vertices. If $\mathcal{S}$ is a $(k,l)$-kernel in $C_{\mathscr{F}}(D)$ with $k \geq 3$ and $l \geq 1$, then by Proposition \ref{unilateraltheorem}, every kernel by paths in $D \langle \cup _{F \in \mathcal{S}}F \rangle$ is a $(k-1,l+1,H)$-kernel by walks in $D$.

On the other hand, if $W=\{x\in V(D) : d(x) =0\}$ is nonempty, then by the previous proof, we have that $D - W$ has a $(k-1,l +1 ,H)$-kernel by walks, say $K$. By Lemma \ref{noisolatedvertices}, we can conclude that $K \cup W$ is a $(k-1, l+1,H)$-kernel by walks in $D$.
\end{proof}

\begin{theorem}
\label{Teostrong}
Let $D$ be an $H$-colored digraph and $\mathscr{F}$ an $H$-class partition of $A(D)$. If for every $F \in \mathscr{F}$ we have that $D\langle F \rangle $ is strongly connected, then for every $k \geq 2$ and $l \geq k+1$,  $D$ has a $(k,l, H)$-kernel by walks.
\end{theorem}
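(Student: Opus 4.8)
The plan is to obtain this statement as a consequence of Proposition~\ref{unilateraltheorem}, using the structural facts about strongly connected classes collected in Lemma~\ref{lemmastronglyconnected} together with the fact that symmetric digraphs have $(k,l)$-kernels (Lemma~\ref{symetricklkernel}); the whole argument amounts to matching parameters correctly.

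First I would dispose of degenerate situations. If $A(D)=\emptyset$, then $V(D)$ consists of isolated vertices and is trivially a $(k,l,H)$-kernel by walks, so we may assume $D$ has at least one arc. Put $W=\{x\in V(D):d(x)=0\}$. Deleting $W$ changes neither $A(D)$ nor any $D\langle F\rangle$, so $\mathscr{F}$ is still an $H$-class partition of $A(D-W)$ with every class inducing a strongly connected subdigraph; by Lemma~\ref{noisolatedvertices} it therefore suffices to produce a $(k,l,H)$-kernel by walks in $D-W$, and hence I may assume $D$ has no isolated vertices.

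Now, since every $F\in\mathscr{F}$ is nonempty and $D$ is loopless, each $D\langle F\rangle$ is a nontrivial strongly connected digraph, so it is unilateral and has no sinks. By Lemma~\ref{lemmastronglyconnected}, $\mathscr{F}$ is walk-preservative and $C_{\mathscr{F}}(D)$ is symmetric. Given $k\ge 2$ and $l\ge k+1$, set $k'=k+1$ and $l'=l-1$; then $k'\ge 3$ and, crucially, $l'\ge k'-1$ (this last inequality is exactly the hypothesis $l\ge k+1$). By Lemma~\ref{symetricklkernel} the symmetric digraph $C_{\mathscr{F}}(D)$ has a $(k',l')$-kernel $\mathcal{S}$, and $\mathcal{S}\neq\emptyset$ since $C_{\mathscr{F}}(D)\neq\emptyset$. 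All hypotheses of Proposition~\ref{unilateraltheorem} now hold with the pair $(k',l')$, so any kernel by paths $K$ in $D\langle\cup_{F\in\mathcal{S}}F\rangle$ — one exists because every digraph has a kernel by paths — is a $(k'-1,l'+1,H)$-kernel by walks in $D$, i.e.\ a $(k,l,H)$-kernel by walks. Applying Lemma~\ref{noisolatedvertices} once more lifts this to the original $D$.

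The argument is essentially bookkeeping; the only points that require a moment's care are the observation that a strongly connected class is automatically nontrivial (because $D$ is loopless) and hence has no sinks, so that Proposition~\ref{unilateraltheorem} genuinely applies, and the check that the bound $l\ge k+1$ is precisely what is needed for Lemma~\ref{symetricklkernel} to yield a $(k+1,l-1)$-kernel of $C_{\mathscr{F}}(D)$.
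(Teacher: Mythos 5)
Your proposal is correct and follows essentially the same route as the paper: both reduce to the non-isolated case via Lemma~\ref{noisolatedvertices}, use Lemma~\ref{lemmastronglyconnected} to get walk-preservativity, symmetry of $C_{\mathscr{F}}(D)$, and the unilateral/no-sinks property of each class, then apply Lemma~\ref{symetricklkernel} with the shifted parameters $(k+1,l-1)$ and conclude via the unilateral result (you cite Proposition~\ref{unilateraltheorem} directly, the paper cites its corollary, Theorem~\ref{corunilat}). The parameter bookkeeping $l-1\geq k \iff l\geq k+1$ matches the paper exactly.
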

\begin{proof}
First, suppose that $D$ has not isolated vertices. Since $D\langle F \rangle $ is strongly connected for every $F \in \mathscr{F}$, we have that $\mathscr{F}$ is walk-preservative (Lemma \ref{lemmastronglyconnected} (a)),  and $D \langle F \rangle$ is unilateral and has no sinks  for every $F \in \mathscr{F}$. 
On the other hand, by Lemma \ref{lemmastronglyconnected} (b), we have that  $C_{\mathscr{F}}(D)$ is a symmetric digraph, which implies that $C_{\mathscr{F}}(D)$ has a $(k+1, l-1)$-kernel for every $k +1 \geq 3$ and $l -1 \geq k $ (Lemma \ref{symetricklkernel}). Hence, by Corollary  \ref{corunilat}, we have that $D$ has a $(k, l, H)$-kernel by walks for every $k \geq 2$ and $l \geq k+1$. 

On the other hand, if $W=\{x\in V(D) : d(x) =0\}$ is nonempty, then by the previous proof, we have that $D - W$ has a $(k,l ,H)$-kernel by walks for every $k \geq 2$ and $l \geq k +1$. By Lemma \ref{noisolatedvertices}, we can conclude that $D$ has a $(k,l, H)$-kernel by walks for every $k \geq 2$ and $l \geq k+1$.
\end{proof}

\begin{theorem}
\label{teostrong2}
Let $D$ be an $H$-colored digraph and $\mathscr{F}$ an $H$-class partition of $A(D)$ such that for every $F \in \mathscr{F}$, $D \langle F \rangle$ is strongly connected and has a obstruction-free vertex in $D$. For every $k \geq 2$, $D$ has a $(k,H)$-kernel by walks.
\end{theorem}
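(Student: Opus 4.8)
The plan is to derive the statement from Proposition \ref{propstrong2} and Theorem \ref{classlema}, together with the symmetry of the auxiliary digraph. Recall that a $(k,H)$-kernel by walks is by definition a $(k,k-1,H)$-kernel by walks, so for each fixed $k\ge 2$ it suffices to produce such a set; I would split into the cases $k=2$ and $k\ge 3$.

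For $k=2$ the claim is immediate from Theorem \ref{classlema}. Its hypothesis holds: every non-isolated $x\in V(D)$ is incident with some arc, hence $x\in V(D\langle F\rangle)$ for some $F\in\mathscr{F}$, and since $D\langle F\rangle$ is strongly connected there is an $xz_F$-path in $D\langle F\rangle$ to the obstruction-free vertex $z_F$ of $D\langle F\rangle$; all arcs of this path lie in the single class $F$, so consecutive arcs have colors compatible in $H$ and the path is an $H$-walk. (For an isolated $x$ the trivial walk at $x$ already works, $x$ being vacuously obstruction-free.) Thus $D$ has a kernel by $H$-walks, which is exactly a $(2,1,H)$-kernel by walks, i.e.\ a $(2,H)$-kernel by walks.

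For $k\ge 3$ I would feed a suitable kernel of $C_{\mathscr{F}}(D)$ into Proposition \ref{propstrong2}. By Lemma \ref{lemmastronglyconnected}(b), $C_{\mathscr{F}}(D)$ is symmetric, so Lemma \ref{symetricklkernel} (applicable since $k-1\ge 2$ and $(k-1)-1=k-2$) provides a $(k-1,k-2)$-kernel $\mathcal{S}$ of $C_{\mathscr{F}}(D)$. As every class of $\mathscr{F}$, and in particular every class in $\mathcal{S}$, induces a strongly connected digraph with an obstruction-free vertex, Proposition \ref{propstrong2} applied to $\mathcal{S}$ yields a $((k-1)+1,(k-2)+1,H)=(k,k-1,H)$-kernel by walks in $D$. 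At the endpoint $k=3$ one is invoking Proposition \ref{propstrong2} with a $(2,1)$-kernel of $C_{\mathscr{F}}(D)$, slightly outside the range $k\ge 3$ in which it is phrased; but the proof of Proposition \ref{propstrong2} uses $k$-independence of $\mathcal{S}$ only to conclude $l_H(C)=l(C')+1\ge k+1$ for a walk $C$ between two representatives, and this already holds for $k\ge 2$, so nothing changes. (Equivalently, one re-runs that argument directly with $\mathcal{S}$ a maximal independent set of $C_{\mathscr{F}}(D)$ and $K=\{z_F:F\in\mathcal{S}\}$, using that $2$-independence of $\mathcal{S}$ forbids arcs of $C_{\mathscr{F}}(D)$ between distinct classes of $\mathcal{S}$, which forces length $\ge 2$ for the class-sequence walk realized in $C_{\mathscr{F}}(D)$ by any walk in $D$ between two vertices of $K$.)

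I do not anticipate a genuine obstacle: the real content sits in Proposition \ref{propstrong2}, and this theorem merely packages it with the structural facts that $C_{\mathscr{F}}(D)$ is symmetric (Lemma \ref{lemmastronglyconnected}(b)) and that symmetric digraphs have $(k,l)$-kernels whenever $k-1\le l$ (Lemma \ref{symetricklkernel}). The only points requiring a little care are matching $k=2$ exactly to the hypotheses of Theorem \ref{classlema}, and the trivial extension of Proposition \ref{propstrong2} down to $k=2$ needed for the boundary case $k=3$; isolated vertices cause no trouble, since Theorem \ref{classlema} accommodates them directly and Proposition \ref{propstrong2} already disposes of them via Lemma \ref{noisolatedvertices}.
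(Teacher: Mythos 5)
Your proposal is correct and follows essentially the same route as the paper: the case $k=2$ via Theorem \ref{classlema}, and $k\geq 3$ via the symmetry of $C_{\mathscr{F}}(D)$ (Lemma \ref{lemmastronglyconnected}(b)), the existence of the appropriate kernel there, and Proposition \ref{propstrong2}. You are in fact slightly more careful than the paper at the boundary $k=3$, where Proposition \ref{propstrong2} must be invoked with a $(2,1)$-kernel of $C_{\mathscr{F}}(D)$ even though it is stated for $k\geq 3$; your observation that its proof uses the lower bound on $k$ nowhere essential is accurate and closes a gap the paper passes over silently.
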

\begin{proof}
Since $C_{\mathscr{F}}(D)$ is a symmetric digraph (Lemma \ref{lemmastronglyconnected} (b)), for every $k \geq 2$ we have that $C_{\mathscr{F}}(D)$ has a $k$-kernel (Theorem \ref{c0.t1}). By Theorem \ref{propstrong2}, $D$ has a $(k, H)$-kernel by walks for every $k \geq 3$. On the other hand, it follows from Theorem \ref{classlema} that $D$ has a $(2,H)$-kernel by walks, concluding that $D$ has a $(k,H)$-kernel by walks for every $k \geq 2$.
\end{proof}

\section*{Acknowledgments}
Hortensia Galeana-Sánchez is supported by CONACYT FORDECYT-PRONACES/39570/2020 
and UNAM-DGAPA-PAPIIT IN102320. Miguel Tecpa-Galván is supported by
CONACYT-604315.

\end{document}